\newtheorem{theorem}{Theorem}[section]
\newtheorem{lemma}{Lemma}[section]
\def\cal{\mathcal}
\let\Re=\undefined
\DeclareMathOperator{\Re}{Re}
\let\Im=\undefined
\DeclareMathOperator{\Im}{Im}
\def\ge{\geqslant}\def\le{\leqslant}
\def\~{\widetilde}
\begin{document}
\title[ Remark on the formula by Rakhmanov and Steklov's conjecture ]{Remark on the formula by Rakhmanov and Steklov's conjecture }
\author{ S. Denisov }
\address{
\begin{flushleft}
University of Wisconsin--Madison\\  Mathematics Department\\
480 Lincoln Dr., Madison, WI, 53706, USA\\  denissov@math.wisc.edu\\
\end{flushleft}
}\maketitle

\begin{abstract}
The conjecture by Steklov was solved negatively by Rakhmanov in
1979. His original proof was based on the formula for orthogonal
polynomial obtained by adding  point masses to the measure of
orthogonality. In this note, we show how this polynomial can be
obtained by applying the method developed recently for proving the
sharp lower bounds for the problem by Steklov.
\end{abstract}\vspace{1cm}

\section{Introduction: Steklov's conjecture and recent development}

Consider the weight $\rho(x)$ on the interval $[-1,1]$ and the
sequence of polynomials $\{P_{n}(x)\}_{n=0}^{\infty}$, which are
orthonormal
\begin{equation}\label{1s1}
\int^1_{-1}P_n(x)\,P_m(x)\,\rho(x)\,dx=\delta_{n,m}\;,\quad
n,m=0,1,2\,\ldots
\end{equation}
with respect to  $\rho$. Assuming that the leading coefficient of
$P_n(x)$ is positive, these polynomials are defined uniquely.  The
Steklov conjecture dates back to 1921 \cite{steklov} and it asks
{\it whether
 a sequence $\{P_n(x)\}$
is bounded at any point $x_{}{\in}(-1,1)$, provided that $\rho(x)$
is positive on $[-1,1]$, i.e.,
\begin{equation}
\rho(x)\ge \delta, \quad \delta>0\,.
\end{equation}}
This conjecture attracted a lot of attention (check, e.g.,
\cite{Ger1,Ger2,Ger3,Gol} and  a survey \cite{suet}). It was solved
negatively by Rakhmanov in the series of two papers \cite{rakh1,
rakh2}. All existing proofs use the following connection between the
polynomials orthogonal on the segment of the real line and on the
unit circle. Let $\psi, (x\in [-1,1], \psi(-1)=0 )$ be a
non-decreasing bounded function with an infinite number of growth
points. Consider the system of polynomials $\{P_k\}, (k=0,1,\ldots
)$ orthonormal with respect to the measure $d\psi$ supported on the
segment $[-1,1]$. Introduce the function

\begin{equation}
\sigma(\theta)=\left\{
\begin{array}{cc}
-\psi(\cos \theta), & 0\le \theta\le \pi, \\
\psi(\cos \theta), & \pi \le \theta \le 2\pi,%
\end{array}%
\right.\label{trans}
\end{equation}
which is bounded and non-decreasing on $[0,2\pi]$. Consider the polynomials $%
\phi_k(z,\sigma)=\lambda_k z^k+\ldots,$ $\lambda_k>0$ orthonormal
with respect to measure $d\sigma$, i.e.,
\begin{equation}\label{1}
\int^{2\pi}_{0}{\phi_n(e^{i\theta})}\,\overline{\phi_m(e^{i\theta})}\,d\sigma\,=\delta_{n,m}\;,\quad
n,m=0,1,2\,\ldots
\end{equation}
These polynomials can be though of as polynomials orthonormal on the
unit circle $\mathbb{T}$ with respect to a measure $\sigma$ given on
$\mathbb{T}$ as well.

Later, we will use the following notation: for every polynomial
$Q_n(z)=q_nz^n+\ldots+q_0$ of degree at most $n$, we introduce the
$(\ast)$--operation:
\[
Q_n(z)\stackrel{(\ast)}{\longrightarrow} Q_n^*(z)=\bar{q}_0
z^n+\ldots+ \bar{q}_n
\]
This $(\ast)$ depends on $n$. Then, we have the  Lemma.

\begin{lemma}{\rm (\cite{5,6})}\label{reduct}
The polynomial $\phi_n$ is related to $P_k$ by the formula
\begin{equation}\label{real-line}
P_k(x,\psi)=\frac{\phi_{2k}(z,\sigma)+\phi^{*}_{2k}(z,\sigma)}{\sqrt{2\pi
\left[ 1+\lambda_{2k}^{-1}\phi_{2k}(0,\sigma)\right]}}\,
z^{-k},\quad k=0,1,\ldots,
\end{equation}
where $x\!=\!(z\!+\!z^{-1})/2$. \end{lemma}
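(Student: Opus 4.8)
The plan is to invoke uniqueness: the orthonormal polynomial of each degree with positive leading coefficient is uniquely determined by $\psi$, so it suffices to show that the right-hand side of \eqref{real-line}, call it $P_k$, (i) is a polynomial in $x$ of degree exactly $k$, (ii) has positive leading coefficient, and (iii) satisfies $\int_{-1}^{1}P_kP_l\,d\psi=\delta_{k,l}$. For (i) I would first note that $d\sigma$ in \eqref{trans} is invariant under $\theta\mapsto-\theta$, so all its moments $\int e^{im\theta}\,d\sigma$ are real; hence each $\phi_n$ has real coefficients, and therefore $\phi_n^{*}(z)=z^{n}\phi_n(1/z)$ as rational functions, with $\phi_n^{*}(0)=\lambda_n$. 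Consequently $z^{-k}\phi_{2k}^{*}(z)=z^{k}\phi_{2k}(1/z)$, so $Q_k(z):=z^{-k}\bigl(\phi_{2k}(z)+\phi_{2k}^{*}(z)\bigr)$ is a Laurent polynomial with exponents in $\{-k,\dots,k\}$ satisfying $Q_k(z)=Q_k(1/z)$; substituting $z^{m}+z^{-m}=2T_m(x)$ with $x=(z+z^{-1})/2$ turns $Q_k$ into a polynomial in $x$ of degree $\le k$, whose coefficient of $x^k$ is $2^{k}\bigl(\lambda_{2k}+\phi_{2k}(0)\bigr)$.

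Next I would check (ii) together with the positivity of the radicand in \eqref{real-line}. Since $\psi$ has infinitely many growth points, so does $\sigma$, and the classical bound $|\phi_{n}(0)|<\phi_n^{*}(0)=\lambda_n$ for the reversed polynomial (equivalently, the modulus of the corresponding Verblunsky coefficient is strictly less than $1$) gives $\lambda_{2k}+\phi_{2k}(0)>0$. Hence $2\pi\bigl[1+\lambda_{2k}^{-1}\phi_{2k}(0)\bigr]=2\pi\bigl(\lambda_{2k}+\phi_{2k}(0)\bigr)/\lambda_{2k}>0$, $P_k$ has degree exactly $k$, and its leading coefficient equals $2^{k}\sqrt{\lambda_{2k}\bigl(\lambda_{2k}+\phi_{2k}(0)\bigr)/(2\pi)}>0$.

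For (iii) the key is a change of variables: by \eqref{trans}, the push-forward under $\theta\mapsto\cos\theta$ of $d\sigma$ restricted to $[0,\pi]$, and likewise of $d\sigma$ restricted to $[\pi,2\pi]$, is exactly $d\psi$ on $[-1,1]$, so for any $g$ depending on $z\in\mathbb T$ only through $x=(z+z^{-1})/2$ one has $\int_{0}^{2\pi}g\,d\sigma=2\int_{-1}^{1}g\,d\psi$. Applying this to $g=P_kP_l$, it remains to evaluate $\int_{\mathbb T}Q_k\overline{Q_l}\,d\sigma$ for $k\ge l$. On $\mathbb T$ one has $\phi_n^{*}(z)=z^{n}\overline{\phi_n(z)}$, so, writing $R_j:=z^{-j}\phi_{2j}$, we get $Q_j=R_j+\overline{R_j}$ there, whence
\[
\int_{\mathbb T}Q_k\overline{Q_l}\,d\sigma=2\,\Re\!\int_{\mathbb T}R_k\overline{R_l}\,d\sigma+2\,\Re\!\int_{\mathbb T}R_kR_l\,d\sigma,
\quad
\int_{\mathbb T}R_k\overline{R_l}\,d\sigma=\langle\phi_{2k},z^{k-l}\phi_{2l}\rangle_\sigma,
\quad
\int_{\mathbb T}R_kR_l\,d\sigma=\langle\phi_{2k},z^{k-l}\phi_{2l}^{*}\rangle_\sigma.
\]
When $l<k$ both of these inner products vanish, because $z^{k-l}\phi_{2l}$ and $z^{k-l}\phi_{2l}^{*}$ are polynomials of degree $\le k+l\le 2k-1$ and therefore lie in the span of $1,z,\dots,z^{2k-1}$, which is orthogonal to $\phi_{2k}$. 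When $l=k$ one finds $\int_{\mathbb T}R_k\overline{R_k}\,d\sigma=1$ and $\int_{\mathbb T}R_k^{2}\,d\sigma=\langle\phi_{2k},\phi_{2k}^{*}\rangle_\sigma=\lambda_{2k}^{-1}\phi_{2k}(0)$, so $\int_{\mathbb T}Q_k^{2}\,d\sigma=2\bigl(1+\lambda_{2k}^{-1}\phi_{2k}(0)\bigr)$. Feeding this into the substitution formula and matching the normalizations of $d\sigma$ and $d\psi$ — which is precisely where the factor $2\pi$ in \eqref{real-line} originates — yields $\int_{-1}^{1}P_kP_l\,d\psi=\delta_{k,l}$; together with (i), (ii) and uniqueness this proves \eqref{real-line}.

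I expect the only genuinely delicate point to be the bookkeeping in the last step: one must use the identity $z^{-k}\phi_{2k}^{*}(z)=\overline{z^{-k}\phi_{2k}(z)}$ on $\mathbb T$ and then observe that multiplying $\phi_{2l}$ or $\phi_{2l}^{*}$ by $z^{k-l}$ keeps the product inside the subspace annihilated by $\phi_{2k}$ precisely because of the ``$2k$ versus $k$'' doubling built into \eqref{real-line} — this is what forces the indices to line up. Keeping careful track of the normalizing constants, so that $2\pi$ comes out correctly, is the other place that requires attention.
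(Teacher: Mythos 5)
Your overall strategy is the standard one for this classical lemma (which the paper itself does not prove, but only cites from Geronimus and Szeg\H{o}): use uniqueness of orthonormal polynomials and verify directly that the right-hand side of \eqref{real-line} is a degree-$k$ polynomial in $x$ with positive leading coefficient that is orthonormal in $L^2(d\psi)$. The symmetry of $d\sigma$ under $\theta\mapsto-\theta$, the realness of the $\phi_n$-coefficients, the identity $z^{-k}\phi_{2k}^*(z)=\overline{z^{-k}\phi_{2k}(z)}$ on $\mathbb T$, the reduction to the two inner products $\langle\phi_{2k},z^{k-l}\phi_{2l}\rangle_\sigma$ and $\langle\phi_{2k},z^{k-l}\phi_{2l}^*\rangle_\sigma$, and the values $\int R_k\overline{R_k}\,d\sigma=1$, $\int R_k^2\,d\sigma=\lambda_{2k}^{-1}\phi_{2k}(0)$ are all correct, as is the change of variables $\int_{\mathbb T}g\,d\sigma=2\int_{-1}^{1}g\,d\psi$ for $g$ a function of $x=\cos\theta$.

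The gap is in the last sentence, where you assert that the factor $2\pi$ in \eqref{real-line} ``originates'' from matching the normalizations of $d\sigma$ and $d\psi$. Under the convention actually written in \eqref{1}, namely $\int_0^{2\pi}\phi_n\overline{\phi_m}\,d\sigma=\delta_{n,m}$, your own computation gives $\int_{\mathbb T}Q_k^2\,d\sigma=2\bigl(1+\lambda_{2k}^{-1}\phi_{2k}(0)\bigr)$, hence $\int_{-1}^1 Q_k^2\,d\psi=1+\lambda_{2k}^{-1}\phi_{2k}(0)$, and therefore the correctly normalized polynomial would be $Q_k/\sqrt{1+\lambda_{2k}^{-1}\phi_{2k}(0)}$, with no $2\pi$. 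The factor $2\pi$ in \eqref{real-line} is only consistent with the alternative normalization $(2\pi)^{-1}\int_0^{2\pi}\phi_n\overline{\phi_m}\,d\sigma=\delta_{n,m}$ used in Szeg\H{o}'s and Geronimus's books (the references cited for the lemma), under which $\int R_k\overline{R_k}\,d\sigma=2\pi$, $\int R_k^2\,d\sigma=2\pi\lambda_{2k}^{-1}\phi_{2k}(0)$, and $\int_{-1}^1 Q_k^2\,d\psi=2\pi\bigl(1+\lambda_{2k}^{-1}\phi_{2k}(0)\bigr)$. You cannot get the $2\pi$ out of the argument as you set it up; you must explicitly switch to (or at least flag) the $(2\pi)^{-1}d\sigma$ normalization, and note that \eqref{1} and \eqref{real-line} are in fact using different conventions. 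As written, the chain of computation you supply contradicts, rather than yields, the asserted constant.
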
 This reduction also
works in the opposite direction: given a measure $\sigma$, defined
on $\mathbb{T}$ and symmetric with respect to $\mathbb{R}$, we can
map it to the measure on the real line and the corresponding
polynomials will be related by \eqref{real-line}.

The version of Steklov's conjecture for the unit circle then reads
as follows:\smallskip

{\flushleft{\it Given $\delta\in (0,1)$ and a probability measure
$\sigma$ which satisfies
\begin{equation}\label{steklo}
\sigma'(\theta)\ge \delta/(2\pi),\quad  {\rm a.e.}\quad \theta\in
[0,2\pi),
\end{equation}
is it true that the sequence $\{\phi_n(z,\sigma)\}$ is bounded for
every $z\in \mathbb{T}$?}}\smallskip

 The normalization
\[
\int d\sigma=1
\]
is not restrictive because of the scaling: $
\phi_n(z,\sigma)=\alpha^{1/2} \phi_n(z,\alpha\sigma), \,\alpha>0$.
The negative answer to this question (see \cite{rakh1}) implied the
solution to Steklov's conjecture on the real line due to Lemma
\ref{reduct}.

 Besides the orthonormal polynomials, we can define the monic
orthogonal ones $\{\Phi_n(z,\sigma)\}$ by requiring
\[
{\rm
coeff}(\Phi_n,n)=1,\,\int_{0}^{2\pi}\Phi_n(e^{i\theta},\sigma)\overline{\Phi_m(e^{i\theta},\sigma)}\,d\sigma\,=0\;,\quad
m<n,
\]
where ${\rm coeff}(Q,j)$ denotes the coefficient in front of $z^j$
in the polynomial $Q$.

 The original argument by Rakhmanov was based on the following
formula for the orthogonal polynomial that one gets after adding
several point masses to a ``background" measure at particular
locations on the circle.
\begin{lemma}{\rm ({\bf Rakhmanov's formula}, \,\cite{rakh1})}\quad\label{geron}Let $\mu$ be a positive measure on $\mathbb T$ with infinitely many growth points and
\[
K_n(\xi,z,\mu)=\sum^n_{j=0}\overline{\phi_j(\xi,\mu)}\phi_j(z,\mu)
\]
be the Christoffel-Darboux kernel, i.e.,
\[
P(\xi)=\langle
P(z),K_n(\xi,z,\mu)\rangle_{L^2(\mathbb{T},\mu)},\quad \forall P:
\,\deg P\le n\,\,.
\]
Then, if $\{\xi_j\}\in\mathbb T,j=1,\,...\,,m,\,m< n$ are chosen
such that
\begin{equation}\label{uslo-k}
K_{n-1}(\xi_j,\xi_l,\mu)=0, \quad j\neq l\end{equation} then
\begin{equation}
\Phi_n(z,\eta)=\Phi_n(z,\mu)-\sum_{k=1}^m
\frac{m_k\Phi_n(\xi_k,\mu)}{1+m_kK_{n-1}(\xi_k,\xi_k,\mu)}K_{n-1}(\xi_k,z,\mu)
\end{equation}
where
\[
\eta=\mu+\sum_{k=1}^m m_k\delta_{\theta_k}, \quad
\xi_k=e^{i\theta_k},\quad m_k\ge 0\,\,.
\]
\end{lemma}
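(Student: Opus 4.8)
The plan is to verify directly that the right-hand side, which I will denote by $R_n(z)$, is the monic orthogonal polynomial of degree $n$ for $\eta$, and then conclude by uniqueness. First I would check that $R_n$ is well defined and monic of degree $n$: the correction sum is built from the kernels $K_{n-1}(\xi_k,\cdot,\mu)$, each a polynomial in $z$ of degree at most $n-1$, so it cannot alter the leading coefficient $1$ of $\Phi_n(z,\mu)$; moreover the denominators $1+m_kK_{n-1}(\xi_k,\xi_k,\mu)$ are strictly positive since $K_{n-1}(\xi_k,\xi_k,\mu)=\sum_{j=0}^{n-1}|\phi_j(\xi_k,\mu)|^2>0$ and $m_k\ge 0$. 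Since $\mu$, and hence $\eta$, has infinitely many growth points, $\Phi_n(z,\eta)$ exists and is unique, so it suffices to show $R_n$ has its defining property.

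The heart of the argument is to prove $\langle R_n,P\rangle_{L^2(\mathbb T,\eta)}=0$ for every polynomial $P$ with $\deg P\le n-1$. Splitting off the point masses,
\[
\langle R_n,P\rangle_{L^2(\eta)}=\langle R_n,P\rangle_{L^2(\mu)}+\sum_{k=1}^m m_k\,R_n(\xi_k)\,\overline{P(\xi_k)}\,.
\]
For the first term I would use that $\Phi_n(\cdot,\mu)\perp P$ in $L^2(\mu)$ together with the reproducing property of the Christoffel--Darboux kernel, in the form $\langle K_{n-1}(\xi_k,\cdot,\mu),P\rangle_{L^2(\mu)}=\overline{P(\xi_k)}$, to obtain
\[
\langle R_n,P\rangle_{L^2(\mu)}=-\sum_{k=1}^m\frac{m_k\,\Phi_n(\xi_k,\mu)}{1+m_kK_{n-1}(\xi_k,\xi_k,\mu)}\,\overline{P(\xi_k)}\,.
\]

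For the point-mass term the hypothesis \eqref{uslo-k} enters decisively: evaluating $R_n$ at a node $\xi_j$, it annihilates every cross term $K_{n-1}(\xi_k,\xi_j,\mu)$ with $k\ne j$, leaving only $k=j$, so that
\[
R_n(\xi_j)=\Phi_n(\xi_j,\mu)\Bigl(1-\frac{m_jK_{n-1}(\xi_j,\xi_j,\mu)}{1+m_jK_{n-1}(\xi_j,\xi_j,\mu)}\Bigr)=\frac{\Phi_n(\xi_j,\mu)}{1+m_jK_{n-1}(\xi_j,\xi_j,\mu)}\,.
\]
Substituting, $\sum_{k=1}^m m_kR_n(\xi_k)\overline{P(\xi_k)}=\sum_{k=1}^m\frac{m_k\Phi_n(\xi_k,\mu)}{1+m_kK_{n-1}(\xi_k,\xi_k,\mu)}\overline{P(\xi_k)}$, which cancels the contribution from $L^2(\mu)$ exactly. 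Hence $R_n$ is orthogonal in $L^2(\eta)$ to $\{1,z,\dots,z^{n-1}\}$; being monic of degree $n$, it equals $\Phi_n(z,\eta)$ by uniqueness of monic orthogonal polynomials.

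I expect no serious obstacle; the only points requiring care are the conjugation convention when invoking the reproducing property of $K_{n-1}$, and using \eqref{uslo-k} at the right moment, namely in evaluating $R_n$ at the mass points rather than elsewhere. The remainder is a short linear-algebra computation.
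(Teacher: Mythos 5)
Your verification is correct and complete: you split the $\eta$-inner product into its absolutely continuous part in $L^2(\mu)$ and the point-mass sum; use $\Phi_n(\cdot,\mu)\perp\mathcal P_{n-1}$ and the reproducing property $\langle K_{n-1}(\xi_k,\cdot,\mu),P\rangle_{L^2(\mu)}=\overline{P(\xi_k)}$ for $\deg P\le n-1$ to evaluate the first piece; then exploit the hypothesis \eqref{uslo-k} to evaluate $R_n$ at each node $\xi_j$, which collapses the correction sum to its diagonal term and yields $R_n(\xi_j)=\Phi_n(\xi_j,\mu)/(1+m_jK_{n-1}(\xi_j,\xi_j,\mu))$; the two pieces cancel, and since $R_n$ is visibly monic of degree $n$ (the kernels are degree $\le n-1$ in $z$ and the denominators are strictly positive), uniqueness of the monic orthogonal polynomial for a measure with infinitely many growth points finishes the argument.

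For the record, the paper does not prove Lemma~\ref{geron}; it simply states it as Rakhmanov's formula with a citation to \cite{rakh1}, so there is no in-paper argument to compare against. What you have written is the standard direct verification of the point-mass perturbation formula, and it is exactly the argument Rakhmanov and Geronimus use. The only mild subtleties you already handled correctly: the inner-product/conjugation convention must match the paper's (so that $\langle P,K_n(\xi,\cdot)\rangle_{L^2(\mu)}=P(\xi)$, equivalently $\langle K_n(\xi,\cdot),P\rangle_{L^2(\mu)}=\overline{P(\xi)}$), and the condition \eqref{uslo-k} is used only at the evaluation step, not in the $L^2(\mu)$ computation. One could add that $\eta$ inherits infinitely many growth points from $\mu$ (it only gains mass), so $\Phi_n(\cdot,\eta)$ exists and is unique, but you already asserted this. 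No gaps.
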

It is known (\cite{rakh1}) that for every $\widehat z\in
\mathbb{T}$, the function $K_{n-1}(\xi,\widehat z,\mu)$ has exactly
$n-1$ different roots $\{\xi_j(\widehat z)\}, j=1,\ldots,n-1$ and
they all lie on $\mathbb{T}$. Moreover, $K_{n-1}(\xi_i,\xi_j,\mu)=0$
for $i\neq j$ (see \cite{rakh1}). The limitation that $\{\xi_j\}$
must be the roots is quite restrictive and the direct application of
this formula with background $d\mu=\delta (2\pi)^{-1}d\theta$ yields
only logarithmic lower bound in the following variational problem:
\begin{equation}\label{loga}
M_{n,\delta}=\sup_{\sigma\in S_\delta}
\|\phi_n(z,\sigma)\|_{L^\infty(\mathbb{T})}\ge C(\delta)\log n,
\quad n>n_0(\delta)
\end{equation}
and $S_\delta$ denotes the class of probability measures that
satisfy \eqref{steklo}. The straightforward iteration of this
``fixed-$n$, varying $\sigma$" construction gave the negative
solution to the original conjecture of Steklov
(\cite{rakh1}).\medskip

{\bf Remark.} It is known \cite{sim1} that for probability measures
$\sigma$ in the  Szeg\H{o} class, i.e., those  $\sigma$ for which
\[
\int_0^{2\pi} \log \sigma' d\theta>-\infty,
\]
 we have
 \[
\exp\left(\frac{1}{4\pi}\int_{\mathbb{T}} \log(2\pi \sigma'(\theta))
d\theta\right)\leq
\left|\frac{\Phi_n(z,\sigma)}{\phi_n(z,\sigma)}\right|\le 1, \quad
\forall z\in \mathbb{C}
\]
Thus, for measures in Steklov class, i.e., those satisfying
\eqref{steklo}, the following estimate holds
\[
\sqrt\delta\leq
\left|\frac{\Phi_n(z,\sigma)}{\phi_n(z,\sigma)}\right|\le 1, \quad
\forall z\in \mathbb{C}
\]
so, is $\Phi_n$ or $\phi_n$ grow in $n$, they grow simultaneously.

The upper bound for $M_{n,\delta}$ is easy to obtain
\begin{equation}\label{rooot}
M_{n,\delta}\le C(\delta)\sqrt n
\end{equation}
and the corresponding result for fixed $\sigma\in S_\delta$ and
$n\to\infty$ is contained in the following Lemma.
\begin{lemma}{\rm (\cite{adt})}\,
If $\sigma\in S_\delta$, then
\begin{equation}\label{steklo2}
\|\phi_n(z,\sigma)\|_{L^\infty(\mathbb{T})}=o(\sqrt n), \quad
n\to\infty\,.
\end{equation}
\end{lemma}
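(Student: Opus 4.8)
The plan is to argue by contradiction, combining two ingredients: a \emph{uniform} Christoffel-type bound for the Christoffel--Darboux kernel on the diagonal, valid for every measure in $S_\delta$, together with the observation that $S_\delta$ lies in the Szeg\H{o} class, which forces the Verblunsky coefficients to be square-summable and hence $|\phi_n(z)|$ to vary slowly in $n$ on $\mathbb T$.

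First I would record the kernel bound. Writing $\kappa_n(z,\mu)=\min\{\int|P|^2\,d\mu:\deg P\le n,\ P(z)=1\}=K_n(z,z,\mu)^{-1}$ for the Christoffel function, the hypothesis $\sigma'\ge\delta/(2\pi)$ means $d\sigma\ge\delta\,\frac{d\theta}{2\pi}$, so $\kappa_n(z,\sigma)\ge\delta\,\kappa_n(z,\frac{d\theta}{2\pi})$; since the orthonormal polynomials of normalized Lebesgue measure are $z^j$, we have $\kappa_n(z,\frac{d\theta}{2\pi})^{-1}=\sum_{j=0}^n|z|^{2j}=n+1$ on $\mathbb T$, hence
\[
\sum_{j=0}^{n}|\phi_j(z,\sigma)|^2=K_n(z,z,\sigma)\le\frac{n+1}{\delta},\qquad z\in\mathbb T .
\]
This already reproves \eqref{rooot}; the point is to upgrade $O(\sqrt n)$ to $o(\sqrt n)$. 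Second, since $\sigma\in S_\delta$ gives $\log(2\pi\sigma')\ge\log\delta$ a.e., the measure is in the Szeg\H{o} class and $\prod_{j\ge0}(1-|\alpha_j|^2)\ge\delta>0$, where $\{\alpha_j\}$ are the Verblunsky coefficients; consequently $\sum_j|\alpha_j|^2<\infty$, so $|\alpha_n|\to0$ and $\sum_{j\ge N}|\alpha_j|^2\to0$ as $N\to\infty$. From the Szeg\H{o} recursion $\phi_{n+1}=(1-|\alpha_n|^2)^{-1/2}\bigl(z\phi_n-\bar\alpha_n\phi_n^\ast\bigr)$ and the identity $|\phi_n^\ast(z)|=|\phi_n(z)|$ for $z\in\mathbb T$,
\[
\sqrt{\tfrac{1-|\alpha_n|}{1+|\alpha_n|}}\;|\phi_n(z)|\ \le\ |\phi_{n+1}(z)|\ \le\ \sqrt{\tfrac{1+|\alpha_n|}{1-|\alpha_n|}}\;|\phi_n(z)|,\qquad z\in\mathbb T ,
\]
so that over any block of indices $[m,n]$ in which all $|\alpha_j|$ are small, $|\phi_j(z)|$ stays within a factor $\exp\bigl(C\sum_{j=m}^{n}|\alpha_j|\bigr)$ of $|\phi_m(z)|$, with $\sum_{j=m}^{n}|\alpha_j|\le\sqrt{(n-m+1)\sum_{j\ge m}|\alpha_j|^2}$.

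Now suppose \eqref{steklo2} fails: there are $\varepsilon_0>0$, $n_k\to\infty$ and $z_k\in\mathbb T$ with $|\phi_{n_k}(z_k)|^2\ge\varepsilon_0 n_k$. Fix an integer $L>8/(\delta\varepsilon_0)$. Because $n_k\to\infty$ and the $\ell^2$-tails of $\{\alpha_j\}$ vanish, for all large $k$ the product of the recursion factors over $[n_k-L,n_k]$ exceeds $1/2$, whence $|\phi_j(z_k)|\ge\tfrac12|\phi_{n_k}(z_k)|\ge\tfrac12\sqrt{\varepsilon_0 n_k}$ for every $j\in[n_k-L,n_k]$. Inserting this into the kernel bound,
\[
\frac{n_k+1}{\delta}\ \ge\ \sum_{j=n_k-L}^{n_k}|\phi_j(z_k)|^2\ \ge\ (L+1)\,\frac{\varepsilon_0 n_k}{4},
\]
so $L+1\le 4(n_k+1)/(\delta\varepsilon_0 n_k)\to 4/(\delta\varepsilon_0)$ as $k\to\infty$, contradicting the choice of $L$. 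Hence $\sup_{z\in\mathbb T}|\phi_n(z)|^2/n\to0$, which is exactly \eqref{steklo2}.

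The routine parts are the comparison of Christoffel functions and the identity $|\phi_n^\ast|=|\phi_n|$ on $\mathbb T$. The delicate step—and the real content—is the block argument: one must choose the block length $L$ \emph{first}, depending only on $\delta$ and $\varepsilon_0$, and only then use $n_k\to\infty$ together with the $\ell^2$-smallness of the Verblunsky tails to make the multiplicative distortion of $|\phi_j(z_k)|$ across the block negligible uniformly in $z_k$. The fact that $\{\alpha_j\}$ is merely square-summable, with no decay rate, is precisely why the argument must be run on blocks of bounded length rather than on growing windows.
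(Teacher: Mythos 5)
Your proof is correct. Since the paper does not reproduce the argument for this lemma (it is merely cited from \cite{adt}), I will only assess the proposal on its own terms: both ingredients are sound—the Christoffel-function comparison $K_n(z,z,\sigma)\le(n+1)/\delta$ on $\mathbb T$ follows exactly as you state from $d\sigma\ge\delta\,d\theta/(2\pi)$, and the two-sided recursion bound $\sqrt{(1-|\alpha_n|)/(1+|\alpha_n|)}\le |\phi_{n+1}|/|\phi_n|\le\sqrt{(1+|\alpha_n|)/(1-|\alpha_n|)}$ on $\mathbb T$ is an immediate consequence of \eqref{srecurs} together with $|\phi_n^*|=|\phi_n|$ on the circle. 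The block argument is the essential step: fixing $L$ depending only on $\delta,\varepsilon_0$, then using $\ell^2$-smallness of the Verblunsky tail (via Cauchy--Schwarz, $\sum_{j=n_k-L}^{n_k-1}|\alpha_j|\le\sqrt{L\sum_{j\ge n_k-L}|\alpha_j|^2}\to0$) to get a uniform $1/2$-stability of $|\phi_j(z_k)|$ over $[n_k-L,n_k]$, after which the kernel bound forces $L+1\lesssim 4/(\delta\varepsilon_0)$, contradicting the choice of $L$. This is precisely the kind of ``bounded block length'' argument one needs when only $\{\alpha_j\}\in\ell^2$ is available with no decay rate, and it reflects the strategy used in \cite{adt} for this estimate.
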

 The gap between $\log n$ and $\sqrt n$ was nearly closed in
the second paper by Rakhmanov \cite{rakh2} where the following bound
was obtained:
\[
M_{n,\delta}\ge C(\delta)\sqrt{\frac{n}{\log^3 n}}
\]
under the assumption that $\delta$ is small.\smallskip

 In the recent paper \cite{adt}, the following two Theorems were
 proved.
\begin{theorem}{\rm (\cite{adt})}\label{T3-i}         If $\delta\in (0,1)$ is fixed, then
\begin{equation}\label{osnova-i}M_{n,\delta} > C(\delta)\sqrt n\,\, .
\end{equation}
\end{theorem}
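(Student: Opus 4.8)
The plan is to produce, for every large $n$, one probability measure $\sigma_n$ obeying \eqref{steklo} and one point $\zeta_n\in\mathbb T$ with $|\phi_n(\zeta_n,\sigma_n)|\ge C(\delta)\sqrt n$; this is exactly what \eqref{osnova-i} asks. I would work throughout with the Christoffel--Darboux kernels, using the two standard facts
\[
|\phi_n(\zeta,\sigma)|^2=K_n(\zeta,\zeta,\sigma)-K_{n-1}(\zeta,\zeta,\sigma),
\]
\[
K_m(\zeta,\zeta,\sigma)^{-1}=\min\Big\{\int_{\mathbb T}|P|^2\,d\sigma:\ \deg P\le m,\ P(\zeta)=1\Big\}.
\]
Writing $A_m(\zeta,\sigma)$ for the right-hand minimum and $\nu:=\sigma-\tfrac{\delta}{2\pi}\,d\theta$ (so that \eqref{steklo} is equivalent to $\nu\ge0$, and $\nu(\mathbb T)=1-\delta$), the Theorem reduces to constructing $\nu_n$ and $\zeta_n$ with $A_n(\zeta_n,\sigma_n)^{-1}-A_{n-1}(\zeta_n,\sigma_n)^{-1}\ge C(\delta)\,n$.

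Two opposing constraints shape the construction. First, $\int|P|^2\,d\sigma\ge\tfrac{\delta}{2\pi}\int|P|^2\,d\theta$ forces $A_m(\zeta,\sigma)\ge\delta/(m+1)$ for every $\sigma\in S_\delta$; hence $A_n(\zeta_n,\sigma_n)^{-1}\le(n+1)/\delta$, and one should design $\nu_n$ so that this is essentially attained, i.e. so that the degree-$n$ polynomial that is $L^2(d\theta)$-extremal at $\zeta_n$ is almost annihilated by $\nu_n$. Second, to beat this off by $\Omega_\delta(n)$ at the next level one must keep $A_{n-1}(\zeta_n,\sigma_n)^{-1}$ strictly below $(n+1)/\delta$ by an amount of order $n$, i.e. arrange that \emph{no} polynomial of degree $\le n-1$ with $P(\zeta_n)=1$ can be simultaneously small in $L^2(d\theta)$ and (nearly) annihilated by $\nu_n$. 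A polynomial of degree $n$ has exactly one more parameter than one of degree $n-1$, and $\nu_n$ has only $1-\delta$ mass to spend, so these two demands are in real tension; this is precisely why applying Lemma \ref{geron} directly with background $\tfrac{\delta}{2\pi}\,d\theta$ --- which pins the atoms at the zeros of $K_{n-1}(\cdot,\zeta_n,\cdot)$, constraint \eqref{uslo-k} --- only produces the logarithmic bound \eqref{loga}, the governing sums behaving like $\sum_{k}1/k\sim\log n$.

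To overcome the logarithm I would take $\nu_n$ with a self-similar, multi-scale structure: a superposition of ``blocks'' distributed over a geometric hierarchy of scales between $1/n$ and $1$, together with a matching test polynomial for the upper estimate of $A_n(\zeta_n,\sigma_n)$ which factors as a product of local pieces, one per scale. One then analyzes the polynomials scale by scale, each scale handled by a single correction step of the form in Lemma \ref{geron} (or its absolutely continuous analogue); the point of the self-similarity is that the loss at each scale is only a bounded \emph{multiplicative} factor, so the telescoped bound over all $\asymp\log n$ scales is still of order $\sqrt n$ and not $\sqrt n/(\log n)^{O(1)}$ as in \cite{rakh2}. The point $\zeta_n$ must be kept ``generic'' with respect to the zeros of the kernels $K_{j-1}(\cdot,\zeta_n,\sigma_n)$ for all $j\le n$ at once, and the hierarchical bookkeeping is what makes such a simultaneous choice possible; this is the flexible substitute for the rigid condition \eqref{uslo-k}.

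I expect the main obstacle to be the lower bound on $A_{n-1}(\zeta_n,\sigma_n)$ --- equivalently the nontrivial upper bound on $K_{n-1}(\zeta_n,\zeta_n,\sigma_n)$, i.e. $\sum_{j<n}|\phi_j(\zeta_n,\sigma_n)|^2$ must be kept of the same order as $n$ even though, by design, the single term $|\phi_n(\zeta_n,\sigma_n)|^2$ is of order $n$. This is a statement about the \emph{entire} family $\{\phi_j(\cdot,\sigma_n)\}_{j\le n}$, and extracting such uniform control of all the intermediate orthonormal polynomials --- while staying within the mass budget $\nu_n(\mathbb T)\le1-\delta$ --- is the crux of the argument. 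Once it is established, it combines with $A_n(\zeta_n,\sigma_n)\le C(\delta)/n$ to give $|\phi_n(\zeta_n,\sigma_n)|^2=A_n^{-1}-A_{n-1}^{-1}\ge C'(\delta)\,n$, which is \eqref{osnova-i}. (As a byproduct, specializing the ``blocks'' to point masses and the background to $\tfrac{\delta}{2\pi}\,d\theta$ turns the correction step of this scheme back into Rakhmanov's formula of Lemma \ref{geron} --- the observation behind the present note.)
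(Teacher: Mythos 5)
Your proposal sets the problem up correctly via the Christoffel function identities, locates the real tension (you must keep $K_{n-1}(\zeta_n,\zeta_n,\sigma_n)\le C(\delta)\, n$ even though the single new term $|\phi_n(\zeta_n,\sigma_n)|^2$ is designed to be of order $n$), and correctly diagnoses why a one-shot application of Lemma~\ref{geron} with Lebesgue background gives only $\log n$. But the route you propose from there is essentially Rakhmanov's own in \cite{rakh2}: iterate his formula over a geometric hierarchy of scales, relaxing the rigid interpolation constraint \eqref{uslo-k}. That is exactly the iteration that produced $\sqrt{n/\log^3 n}$ rather than $\sqrt n$. Your central claim --- that the loss per scale is only a bounded multiplicative factor and therefore the telescoped loss over $\asymp\log n$ scales is harmless --- is an assertion, not an argument: a bounded multiplicative loss $c>1$ across $\log n$ scales compounds to $n^{\ln c}$, which degrades the exponent unless $c$ is forced essentially to $1$, and arranging $c\approx 1$ uniformly over all the intermediate orthonormal polynomials $\phi_j$, $j<n$, while staying inside the mass budget $\nu_n(\mathbb T)\le 1-\delta$, is precisely the obstruction you yourself flag as ``the crux'' and leave unresolved. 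That is a genuine gap at the center of the argument, and it is the one Rakhmanov hit.

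The route taken in the paper (Section~2, following \cite{adt}) is qualitatively different and sidesteps this gap entirely: it abandons the Christoffel--Darboux calculus. Lemma~\ref{decop} builds $\sigma$ by \emph{splicing Schur parameters}: the first $n$ parameters are those of the Bernstein--Szeg\H{o} measure $d\theta/(2\pi|\phi_n^*|^2)$, and the tail is taken from a smooth auxiliary measure $\~\sigma$ with Caratheodory function $\~F$. The transfer-matrix computation \eqref{m-ca}--\eqref{facti1} produces the closed-form weight \eqref{mp}, from which both Steklov membership and the size of $\phi_n$ at a point are read off directly. One never needs to estimate $K_{n-1}(\zeta,\zeta,\sigma)$ or any intermediate $\phi_j$, because $\phi_n$ is the $n$-th orthonormal polynomial of $\sigma$ \emph{by construction} (the first $n$ Schur coefficients are matched), not by an extremal estimate; conditions \eqref{od1} and \eqref{sec1} on the chosen $\phi_n$ and $\~F$ then finish the proof. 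Rakhmanov's formula emerges as a special case of this machinery --- which is the observation the present note is about --- but it is not the engine of the $\sqrt n$ bound, and a proof organized around it, as yours is, would need a new idea to control the intermediate terms that the decoupling construction never has to touch.
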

\smallskip and\smallskip
\begin{theorem}{\rm (\cite{adt})}\label{rrra-i}            Let $\delta{\in}(0,1)$ be fixed. Then, for every positive sequence
$\{\beta_n\}:\lim_{n\to\infty}\beta_n=0$, there is a probability
measure $\sigma^*:d\sigma^*={\sigma^*}'d\theta,\,
\sigma^*{\in}S_\delta$ such that
\begin{equation}\label{est-ra-i}
\|\phi_{k_n}(z,\sigma^*)\|_{L^\infty(\mathbb T)}\ge
\beta_{k_n}\sqrt{k_n}
\end{equation}
for some sequence $\{k_n\}\subset\mathbb N$.\end{theorem}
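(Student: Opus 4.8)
The plan is to obtain this single fixed measure from the ``fixed--degree'' result of Theorem~\ref{T3-i} by gluing together extremal configurations living at a rapidly increasing sequence of scales $N_1<N_2<\cdots$, in the spirit of the iteration that upgrades the fixed--$n$ construction of \cite{rakh1} to a genuine counterexample to Steklov's conjecture, but keeping a quantitative loss budget so that the prescribed null sequence $\{\beta_n\}$ can be absorbed. The building block at a scale $N$ is extracted from (the proof of) Theorem~\ref{T3-i}: by the absolutely continuous construction of \cite{adt}, for each $\delta'\in(0,1)$ one has an extremal measure in $S_{\delta'}$ of the form $\tfrac{\delta'}{2\pi}\,d\theta+(1-\delta')\kappa_N\,d\theta$ with $\kappa_N\ge 0$ a probability density and $\|\phi_N\|_{L^\infty(\mathbb T)}>C(\delta')\sqrt N$; by the scaling $\phi_N(z,c\mu)=c^{-1/2}\phi_N(z,\mu)$ this is the same as saying that the \emph{unnormalized} measure $\tfrac{\delta}{2\pi}\,d\theta+t\,\kappa_N\,d\theta$, with $t$ and $\delta'$ linked by $\delta'=\delta/(\delta+t)$, has $L^\infty$--norm of $\phi_N$ at least a $\delta$--dependent constant times $C(\delta')\sqrt N$. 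The one structural fact I need beyond the bare statement of Theorem~\ref{T3-i} is that this configuration can be taken \emph{localized}: the bump $\kappa_N$ is concentrated near a single point $z_N\in\mathbb T$, that is also where $|\phi_N|$ is (nearly) maximal, and the near--extremal degree--$N$ polynomial $P_N$ realizing the small Christoffel function $\lambda_N(z_N,\cdot)$ there decays away from $z_N$ like a Dirichlet kernel, $|P_N(e^{i\theta})|^2\lesssim (N|\theta-\theta_N|)^{-2}$. This is the behaviour one expects from the resonance mechanism underlying \cite{adt}, and verifying it is the main point; everything else is bookkeeping.

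Granting this, fix a geometric sequence of strengths $t_j\asymp\rho^{3j}$, normalized so that $\sum_j t_j=1-\delta$, and radii $r_j=\rho^{j}$ for a small $\rho\in(0,\tfrac12)$; set $\delta'_j=\delta/(\delta+t_j)$, place the block of scale $N_j$ at a point $z_{N_j}$ at distance $r_j$ from a common accumulation point, so that $\operatorname{dist}(z_{N_j},z_{N_i})\gtrsim r_{\min(i,j)}$ automatically (the $r_j$ halve at each step), and put
\[
\sigma^{*}=\frac{\delta}{2\pi}\,d\theta+\sum_{j\ge1} t_j\,\kappa_{N_j}\,d\theta .
\]
Since every summand is nonnegative and the masses add up to $1$, $\sigma^{*}$ is an absolutely continuous probability measure with $(\sigma^{*})'\ge\delta/(2\pi)$, i.e.\ $\sigma^{*}\in S_\delta$ --- this is the entire content of the $S_\delta$--membership, and it is precisely why $\sigma^{*}$ is built by adding bumps to the background rather than, say, by superposing probability measures.

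The analytic core is a stability estimate showing that at degree $N_j$ the measure $\sigma^{*}$ cannot do much worse than the single--block measure $\sigma^{(j)}:=\tfrac{\delta}{2\pi}d\theta+t_j\kappa_{N_j}d\theta$, which by the previous paragraph satisfies $\|\phi_{N_j}(\cdot,\sigma^{(j)})\|_{L^\infty}\gtrsim C(\delta'_j)\sqrt{N_j}$ (implied constant depending only on $\delta$). I use $|\phi_{N_j}(z,\sigma)|^{2}=K_{N_j}(z,z,\sigma)-K_{N_j-1}(z,z,\sigma)=\lambda_{N_j}(z,\sigma)^{-1}-\lambda_{N_j-1}(z,\sigma)^{-1}$, where $\lambda_m(z,\sigma)=\min\{\int|P|^2d\sigma:\deg P\le m,\ P(z)=1\}$; the map $\sigma\mapsto\lambda_m(z,\sigma)$ is monotone, so from $\sigma^{*}\ge\sigma^{(j)}$ we get $K_{N_j-1}(z_{N_j},z_{N_j},\sigma^{*})\le K_{N_j-1}(z_{N_j},z_{N_j},\sigma^{(j)})$, while testing $\lambda_{N_j}(z_{N_j},\sigma^{*})$ against the $\sigma^{(j)}$--extremal polynomial $P_{N_j}$ gives $\lambda_{N_j}(z_{N_j},\sigma^{*})\le\lambda_{N_j}(z_{N_j},\sigma^{(j)})+\eta_j$ with
\[
\eta_j=\int |P_{N_j}|^{2}\sum_{i\ne j}t_i\kappa_{N_i}\,d\theta\ \lesssim\ \frac{1}{N_j^{2}}\sum_{i\ne j}\frac{t_i}{\operatorname{dist}(z_{N_j},z_{N_i})^{2}}\ \lesssim\ \frac{1}{N_j^{2}}\Bigl(\,\sum_i\frac{t_i}{r_i^{2}}+\frac{1-\delta}{r_j^{2}}\Bigr),
\]
using the Dirichlet decay of $P_{N_j}$ and the separation of the points; the series $\sum_i t_i/r_i^{2}$ converges by the geometric choices. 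Since $K_{N_j}(\,\cdot\,,\cdot\,,\sigma^{(j)})\le(N_j+1)/\delta$, once $N_j$ is chosen large enough that $\eta_j(N_j+1)^2/\delta^2=o\bigl(C(\delta'_j)^{2}N_j\bigr)$ --- a finite requirement on $N_j$ for each $j$ --- we obtain $K_{N_j}(z_{N_j},z_{N_j},\sigma^{*})\ge K_{N_j}(z_{N_j},z_{N_j},\sigma^{(j)})-o\bigl(C(\delta'_j)^{2}N_j\bigr)$, whence
\[
|\phi_{N_j}(z_{N_j},\sigma^{*})|^{2}\ \ge\ |\phi_{N_j}(z_{N_j},\sigma^{(j)})|^{2}-o\bigl(C(\delta'_j)^{2}N_j\bigr)\ \gtrsim\ C(\delta'_j)^{2}N_j .
\]

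Finally one tunes the free parameters against the prescribed null sequence: having fixed $t_j,r_j,\delta'_j$ (hence $C(\delta'_j)>0$) as above, choose $N_j\uparrow\infty$ so large that, in addition to the requirements collected in the previous paragraph, $\beta_{N_j}$ falls below the constant produced in the last display; then $\|\phi_{N_j}(\cdot,\sigma^{*})\|_{L^\infty(\mathbb T)}\ge|\phi_{N_j}(z_{N_j},\sigma^{*})|\ge\beta_{N_j}\sqrt{N_j}$ for all large $j$, and setting $k_n:=N_{j_0+n}$ for a suitable $j_0$ finishes the proof. I expect the main obstacle to be the structural input of the first paragraph --- that the ADT extremal configuration can be taken localized near one point, with the extremal polynomial exhibiting Dirichlet--type decay --- since once that is available, the monotonicity of the Christoffel function together with the separation--of--scales estimate above does the rest, and the sequence $\{\beta_n\}$ enters only through the harmless final choice of how fast $N_j\to\infty$.
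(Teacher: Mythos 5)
The paper you were given does not prove this statement at all: Theorem~\ref{rrra-i} is stated here as an imported result from \cite{adt}, so there is no ``paper's own proof'' to compare against. What the present paper \emph{does} supply is the machinery (Lemma~\ref{decop}) by which one glues a prescribed degree-$n$ polynomial to a prescribed tail measure at the level of Schur parameters, and that is the mechanism one would naturally iterate to manufacture a single $\sigma^*\in S_\delta$ whose polynomials are large along a sparse sequence of degrees: at each stage you keep the already-fixed initial Schur block and choose the next block so that the resulting $\phi_{N_{j+1}}^*$ is again a resonant polynomial and $\widetilde F$ keeps $\sigma'$ pinned between $\delta/(2\pi)$ and $1/(2\pi)$. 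Your construction takes a genuinely different route --- superposing bump densities and controlling cross-terms via monotonicity and a stability estimate for the Christoffel function --- and, modulo the structural input you flag, the bookkeeping in the later paragraphs (the identity $|\phi_m(z,\sigma)|^2=\lambda_m(z,\sigma)^{-1}-\lambda_{m-1}(z,\sigma)^{-1}$, the monotonicity $\sigma^*\ge\sigma^{(j)}$, the test-polynomial bound on $\lambda_{N_j}(z_{N_j},\sigma^*)$, the uniform $K_{N_j}\le(N_j+1)/\delta$, and the final choice of $N_j$ against $\beta_n\to 0$) is correct.

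The gap is exactly where you place it, and it is not a small one. Theorem~\ref{T3-i} does not assert, and the sketch of the ADT construction in Section~2 does not provide, the decomposition $d\sigma=\tfrac{\delta}{2\pi}d\theta+(1-\delta)\kappa_N\,d\theta$ with $\kappa_N$ a probability density concentrated near a single point $z_N$, nor the Dirichlet-kernel pointwise decay $|P_N(e^{i\theta})|^2\lesssim(N|\theta-\theta_N|)^{-2}$ of the associated Christoffel minimizer. What ADT actually produce is a weight of the form \eqref{mp}, $\sigma'=2\Re\widetilde F/\bigl(\pi|\phi_n+\phi_n^*+\widetilde F(\phi_n^*-\phi_n)|^2\bigr)$, with $\phi_n$ a resonant polynomial and $\widetilde F$ a Caratheodory function engineered so that the inequality \eqref{sec1} holds; this weight stays above $\delta/(2\pi)$ everywhere but it oscillates globally on $\mathbb T$, and it is not presented as, nor obviously equal to, Lebesgue plus a single localized bump. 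Until you extract a localized version of the ADT extremal and verify the Dirichlet-type decay of the extremal polynomial for $\tfrac{\delta}{2\pi}d\theta+t\kappa_N d\theta$ (which is not automatic once a bump is added on top of the background), the $\eta_j$ estimate at the heart of your stability argument has no support, and the proof does not close. The Schur-parameter concatenation route via Lemma~\ref{decop} sidesteps this entirely: it never needs to know where the weight is large or small, only that at each stage the Steklov bound on $\sigma'$ is preserved.
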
 These two
results completely settle the problem by Steklov on the sharpness of
 estimates \eqref{rooot} and \eqref{steklo2}. The method used in
the proof was very different from those of Rakhmanov. In the current
paper, we will show that it can be adjusted to the cover
construction by Rakhmanov. This new modification is interesting in
its own as it contains certain cancelation different from the one
used in \cite{adt}.\smallskip

The structure of the paper is as follows. The second section
contains the explanation of the main idea used in \cite{adt} to
prove Theorem \ref{T3-i}. In the third one, we show how it can be
used to cover the Rakhmanov's construction.

We will use the following notation. The Cauchy kernel $C(z,\xi)$ is
defined as
\[
C(z,\xi)=\frac{\xi+z}{\xi-z},\quad\xi\in\mathbb T\,\,.
\]
The function analytic in $\mathbb{D}=\{z:|z|<1\}$ is called
Caratheodory function if its real part is nonnegative
in~$\mathbb{D}$. Given a set $\Omega$, $\chi_\Omega$ denotes the
characteristic function of $\Omega$. If two positive functions
$f_{1(2)}$ are given, we write $f_1\lesssim f_2$ if there is an
absolute constant $C$ such that
\[
f_1<Cf_2
\]
for all values of the argument. We define $f_1\gtrsim f_2$
similarly. Writing $f_1\sim f_2$  means $f_1\lesssim f_2\lesssim
f_1$.

\bigskip

\section{Method used to prove Theorem \ref{T3-i}}

In this section we explain an idea used in the proof of Theorem
\ref{T3-i}. We start with recalling some basic facts about the
polynomial orthogonal on the unit circle. With any probability
measure $\mu$, which is defined on the unit circle and have
infinitely many growth points, one can associate the orthonormal
polynomials of the first and second kind, $\{\phi_n\}$ and
$\{\psi_n\}$, respectively. $\{\phi_n\}$ satisfy the following
recursions (\cite{sim1}, p. 57) with Schur parameters
$\{\gamma_n\}$:
\begin{equation}\left\{\begin{array}{cc}
\phi_{n+1}=\rho_n^{-1}(z\phi_n-\overline\gamma_n\phi_n^*),&\phi_0=1\\
\phi_{n+1}^*=\rho_n^{-1}(\phi_n^*-\gamma_n z\phi_n),&\phi^*_0=1
\end{array}\right.
\label{srecurs}
\end{equation}
and $\{\psi_n\}$ satisfy the same recursion but with Schur
parameters $\{-\gamma_n\}$, i.e.,
\begin{equation}\label{secon}\left\{\begin{array}{cc}
\psi_{n+1}=\rho_n^{-1}(z\psi_n+\overline\gamma_n\psi_n^*),&\psi_0=1\\
\psi_{n+1}^*=\rho_n^{-1}(\psi_n^*+\gamma_n z\psi_n),&\psi^*_0=1
\end{array}\right.\end{equation}
The coefficient $\rho_n$ is defined as
\[
\rho_n=\sqrt{1-|\gamma_n|^2}
\]
The following Bernstein-Szeg\H{o} approximation  is valid:
\begin{lemma}{\rm (\cite{5},\cite{sim1})}\, Suppose $d\mu$ is a probability measure and $\{\phi_j\}$ and $\{\psi_j\}$ are the
corresponding orthonormal polynomials of the first/second kind,
respectively. Then, for any $N$, the Caratheodory function
\[
F_N(z)=\frac{\psi_N^*(z)}{\phi_N^*(z)}=\int_{\mathbb T}
C(z,e^{i\theta})d\mu_N(\theta),\,\,{\rm where}\quad
d\mu_N(\theta)=\frac{d\theta}{2\pi|\phi_N(e^{i\theta})|^2}=
\frac{d\theta}{2\pi|\phi^*_N(e^{i\theta})|^2}
\]
has the first $N$ Taylor coefficients  identical to the Taylor
coefficients of the function
\[
F(z)=\int_{\mathbb T}C(z,e^{i\theta})d\mu(\theta)\,\,.
\]
In particular, the polynomials $\{\phi_j\}$ and $\{\psi_j\}$,
$j\!\le\!N$ are the orthonormal polynomials of the first/second kind
for the measure $d\mu_N$.
\end{lemma}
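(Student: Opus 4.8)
The plan is to recognize $F_N=\psi_N^*/\phi_N^*$ as the Herglotz transform of $\mu_N$, and then to match finitely many Taylor coefficients against $F$ using the orthogonality of $\phi_N$ to the lower powers. First I would extract two elementary facts from \eqref{srecurs} and \eqref{secon}. Setting $z=0$ in those recursions gives $\phi_N^*(0)=\psi_N^*(0)=\prod_{j=0}^{N-1}\rho_j^{-1}=:\kappa_N>0$, the common leading coefficient of $\phi_N$ and $\psi_N$; and an induction on $n$ yields the Wronskian-type identity $\phi_n(z)\psi_n^*(z)+\phi_n^*(z)\psi_n(z)=2z^n$ (the base case $n=0$ is trivial, and in the inductive step the cross terms cancel while $\rho_n^{-2}$ meets $1-|\gamma_n|^2=\rho_n^2$). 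Since the zeros of $\phi_N$ lie in $\mathbb D$ — classical, and also obtainable by a short induction on \eqref{srecurs} — the polynomial $\phi_N^*$ is zero-free on $\overline{\mathbb D}$, so $F_N$ is analytic in a neighbourhood of $\overline{\mathbb D}$ and $F_N(0)=\kappa_N/\kappa_N=1$.

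Next I would pass to the boundary. On $\mathbb T$ the $(\ast)$-operation reads $\phi_N^*(z)=z^N\overline{\phi_N(z)}$, $\psi_N^*(z)=z^N\overline{\psi_N(z)}$, so $|\phi_N|=|\phi_N^*|$ there and $\overline{F_N(e^{i\theta})}=\psi_N/\phi_N$; together with the Wronskian identity this gives
\[
2\,\Re F_N(e^{i\theta})=\frac{\phi_N\psi_N^*+\phi_N^*\psi_N}{\phi_N\phi_N^*}=\frac{2e^{iN\theta}}{e^{iN\theta}\,|\phi_N(e^{i\theta})|^{2}}=\frac{2}{|\phi_N(e^{i\theta})|^{2}} .
\]
Hence $\Re F_N\ge 0$ on $\mathbb T$, and since $F_N$ continues analytically across $\mathbb T$, the Poisson representation (or the minimum principle for the harmonic function $\Re F_N$) forces $\Re F_N>0$ throughout $\mathbb D$, i.e. $F_N$ is a Caratheodory function. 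By the Herglotz theorem $F_N(z)=i\,\Im F_N(0)+\int_{\mathbb T}C(z,\xi)\,d\nu(\theta)$ with $\nu\ge 0$, $\nu(\mathbb T)=\Re F_N(0)=1$, and the analytic continuation forces $\nu$ to be absolutely continuous with $d\nu=\Re F_N(e^{i\theta})\,\tfrac{d\theta}{2\pi}=d\mu_N$ and $\Im F_N(0)=0$. This is precisely $F_N(z)=\int_{\mathbb T}C(z,\xi)\,d\mu_N$, shows $\mu_N$ is a probability measure, and — using $|\phi_N|=|\phi_N^*|$ on $\mathbb T$ — yields the alternative form $d\mu_N=\tfrac{d\theta}{2\pi|\phi_N^*|^2}$.

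For the Taylor coefficients I would compare $F_N$ with $F=\int_{\mathbb T}C(z,\xi)\,d\mu$ via the decomposition
\[
\phi_N^*(z)\,C(z,\xi)=-(\xi+z)\,\frac{\phi_N^*(z)-\phi_N^*(\xi)}{z-\xi}+\phi_N^*(\xi)\,C(z,\xi) .
\]
Integrating in $\xi$ against $d\mu$: the first term contributes a polynomial in $z$ of degree $\le N$, while for the second a brief computation using $\phi_N^*(\xi)=\xi^N\overline{\phi_N(\xi)}$ on $\mathbb T$ and $\int_{\mathbb T}\phi_N\,\bar\xi^{\,l}\,d\mu=0$ for $0\le l\le N-1$ gives $\int_{\mathbb T}\phi_N^*(\xi)\,\bar\xi^{\,k}\,d\mu=0$ for $1\le k\le N$ and $\int_{\mathbb T}\phi_N^*\,d\mu=\kappa_N^{-1}$, hence $\int_{\mathbb T}\phi_N^*(\xi)C(z,\xi)\,d\mu=\kappa_N^{-1}+O(z^{N+1})$. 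Thus $F(z)\phi_N^*(z)$ is a polynomial of degree $\le N$ plus $O(z^{N+1})$; that polynomial must equal $\psi_N^*$, and once this is granted one gets $F\phi_N^*-\psi_N^*=O(z^{N+1})$, whence $F-F_N=(F\phi_N^*-\psi_N^*)/\phi_N^*=O(z^{N+1})$ because $\phi_N^*(0)=\kappa_N\ne0$; in particular $F$ and $F_N$ agree in all Taylor coefficients of order $\le N$. Finally, this agreement means $\mu$ and $\mu_N$ have the same moments $\int_{\mathbb T}\xi^{\,j}$ for $|j|\le N$, which determine the Schur parameters $\gamma_0,\dots,\gamma_{N-1}$ and hence, through \eqref{srecurs}--\eqref{secon}, the polynomials $\phi_j$ and $\psi_j$ for $j\le N$ — the ``in particular'' claim.

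The genuine obstacle is the step I flagged: identifying the degree-$\le N$ polynomial part of $F\phi_N^*$ with $\psi_N^*$, rather than with some unspecified polynomial of degree $\le N$. This is the substantive classical content (the Bernstein--Szeg\H{o}/Schur--Geronimus relations), which I would close either by verifying that the polynomial produced above satisfies the recursion \eqref{secon} with Schur parameters $-\gamma_n$, or by citing \cite{sim1}; the remaining pieces — the Wronskian identity, the boundary computation, the Herglotz step, and the moment/Schur bookkeeping — are routine.
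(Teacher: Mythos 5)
The paper gives no proof of this lemma --- it is stated as a citation to Geronimus \cite{5} and Simon \cite{sim1} --- so there is no ``paper's proof'' to compare against. That said, your outline is essentially correct and all the verifications you sketch do go through (the Wronskian identity $\phi_n\psi_n^*+\phi_n^*\psi_n=2z^n$ by induction on \eqref{srecurs}--\eqref{secon}, the boundary computation $2\Re F_N=2/|\phi_N|^2$ on $\mathbb T$, the Herglotz step, and the moment calculation $\int\phi_N^*\bar\xi^k\,d\mu=\kappa_N^{-1}\delta_{k,0}$ for $0\le k\le N$). The obstacle you flag is, however, a real one: showing that $F\phi_N^*=Q_N+O(z^{N+1})$ for \emph{some} polynomial $Q_N$ of degree $\le N$ does not by itself identify $Q_N$ with $\psi_N^*$, and ``verifying the recursion \eqref{secon}'' for the extracted polynomial is not obviously easier than the classical integral formula for $\psi_N^*$ you would be proving.

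There is a cleaner way to close the gap that actually reverses your order of argument and uses a tool the paper itself supplies. Having established (via Herglotz) that $d\mu_N=\tfrac{\Re F_N}{2\pi}\,d\theta=\tfrac{d\theta}{2\pi|\phi_N^*|^2}$ is a \emph{probability} measure, and knowing $\phi_N$ has all zeros in $\mathbb D$ with $\operatorname{coeff}(\phi_N,N)=\kappa_N>0$, Lemma~\ref{vspomag} immediately gives that $\phi_N$ is the $N$th orthonormal polynomial for $\mu_N$. Now the recursion \eqref{srecurs} can be run backwards: from $\phi_{n+1}$ alone one reads off $\gamma_n=-\overline{\phi_{n+1}(0)}/\phi_{n+1}^*(0)$ and then recovers $\rho_n z\phi_n=\phi_{n+1}+\bar\gamma_n\phi_{n+1}^*$ and $\rho_n\phi_n^*=\gamma_n\phi_{n+1}+\phi_{n+1}^*$. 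Hence $\phi_N$ uniquely determines $\gamma_0,\dots,\gamma_{N-1}$, and since this is the same polynomial for $\mu$ and for $\mu_N$, the two measures share Schur parameters up to index $N-1$. This proves the ``in particular'' statement directly, and it also forces $\mu$ and $\mu_N$ to have the same moments $\int\bar\xi^j\,d\mu$, $0\le j\le N$, equivalently the same first $N+1$ Taylor coefficients of $F$ and $F_N$ --- which is the first assertion. This route avoids the $Q_N=\psi_N^*$ identification entirely and makes the argument self-contained given the paper's Lemma~\ref{vspomag}; the parts of your proposal leading up to the Herglotz representation remain as the essential preparatory steps.
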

We also need the following Lemma which can be verified directly:
\begin{lemma}\label{vspomag}
The polynomial $P_n(z)$ of degree $n$ is the orthonormal polynomial
for a probability measure with infinitely many growth points if and
only if
\begin{itemize}
\item[1.] $P_n(z)$ has all $n$ zeroes inside $\mathbb D$ (counting the multiplicities).
\item[2.] The normalization conditions
\[
\int_\mathbb
T\frac{d\theta}{2\pi|P_n(e^{i\theta})|^2}=1~,\quad\operatorname{coeff}(P_n,n)>0
\]
are satisfied.\end{itemize}\end{lemma}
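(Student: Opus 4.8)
The plan is to prove the two implications separately, using the Bernstein--Szeg\H{o} Lemma for the forward direction and an explicit Bernstein--Szeg\H{o} measure for the converse.

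For the ``only if'' part, suppose $P_n=\phi_n(\cdot,\mu)$ for a probability measure $\mu$ with infinitely many growth points. Positivity of $\operatorname{coeff}(P_n,n)$ is part of the normalization of orthonormal polynomials and also follows inductively from \eqref{srecurs}, since the leading coefficients satisfy $\kappa_{n+1}=\rho_n^{-1}\kappa_n$. That all $n$ zeros lie in $\mathbb D$ is the classical projection argument: writing $P_n=(z-z_0)Q$ with $\deg Q=n-1$ and using $0=\langle P_n,Q\rangle_{L^2(\mu)}$ gives $z_0=\langle zQ,Q\rangle_{L^2(\mu)}/\langle Q,Q\rangle_{L^2(\mu)}$, and since $Q$ vanishes at only finitely many points while $\mu$ has infinitely many growth points, $|z|$ is not $\mu$-a.e.\ constant on $\{Q\neq 0\}$, so Cauchy--Schwarz is strict and $|z_0|<1$; as $\deg P_n=n$ this accounts for all zeros with multiplicity. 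Finally, the normalization integral is precisely the assertion that the measure $\mu_n$ in the Bernstein--Szeg\H{o} Lemma is a probability measure: since $F_n$ and $F$ share their $0$-th Taylor coefficient and $C(0,e^{i\theta})\equiv 1$, one gets $\int\frac{d\theta}{2\pi|P_n(e^{i\theta})|^2}=\int d\mu_n=F_n(0)=F(0)=\int d\mu=1$ (the case $n=0$ being trivial).

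For the ``if'' part, assume (1) and (2) and set $d\mu=\frac{d\theta}{2\pi|P_n(e^{i\theta})|^2}$. By (1) the polynomial $P_n$ has no zeros on $\mathbb T$, so this density is continuous and bounded below by a positive constant; hence by (2) the measure $\mu$ is a probability measure whose support is all of $\mathbb T$, in particular with infinitely many growth points. It then suffices to verify that $P_n=\phi_n(\cdot,\mu)$, namely that $\langle P_n,z^j\rangle_{L^2(\mu)}=0$ for $0\le j\le n-1$, that $\|P_n\|_{L^2(\mu)}=1$, and that $\operatorname{coeff}(P_n,n)>0$. The last two are immediate: the norm equals $\int\frac{d\theta}{2\pi}=1$ and positivity of the leading coefficient is hypothesis (2). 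For the orthogonality I would use the identity $\overline{P_n(e^{i\theta})}=e^{-in\theta}P_n^*(e^{i\theta})$, which rewrites $\langle P_n,z^j\rangle_{L^2(\mu)}$ as $\int_0^{2\pi}\frac{e^{i(n-j)\theta}}{2\pi\,P_n^*(e^{i\theta})}\,d\theta$; by (1) all zeros of $P_n^*$ lie outside $\overline{\mathbb D}$, so $1/P_n^*$ is analytic in a neighborhood of $\overline{\mathbb D}$, and expanding it in its (uniformly convergent on $\mathbb T$) Taylor series shows that the integral picks out the coefficient of index $j-n<0$ and therefore vanishes. Uniqueness of the degree-$n$ orthonormal polynomial for a measure with infinitely many growth points then yields $P_n=\phi_n(\cdot,\mu)$.

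As the statement suggests, this is essentially a direct verification; the only place needing a small observation is the orthogonality computation in the converse, where one recognizes the relevant integral as a negatively-indexed Taylor coefficient of $1/P_n^*$, a function that is analytic up to and across $\mathbb T$ exactly because of hypothesis (1). Apart from that, everything reduces to the projection argument for the location of zeros and to reading off the $0$-th Taylor coefficient in the Bernstein--Szeg\H{o} Lemma.
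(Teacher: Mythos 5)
Your proof is correct and takes essentially the same route as the paper's, whose argument is just the ``if'' direction: take the Bernstein--Szeg\H{o} measure $d\theta/(2\pi|P_n(e^{i\theta})|^2)$ and observe that orthogonality is immediate (the Fourier-coefficient computation you spell out via $\overline{P_n}=e^{-in\theta}P_n^*$ and analyticity of $1/P_n^*$). You have additionally written out the standard ``only if'' direction, which the paper leaves implicit.
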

\begin{proof} Take $2\pi|P_n(e^{i\theta})|^{-2}d\theta$ itself as a
probability measure. The orthogonality is then immediate.
\end{proof}

We continue with a Lemma which paves the way for constructing the
measure giving, in particular, the optimal bound \eqref{osnova-i}.
It is a special case of a solution to the
 truncated moment's problem.

\begin{lemma}\label{decop}
Suppose we are given a polynomial $\phi_n$ and  Caratheodory
function $\~F$
 which satisfy the following properties
\begin{itemize}
\item[1.] $\phi_n^*(z)$ has no roots in $\overline{\mathbb D}$.
\item[2.] Normalization on the size and ``rotation\!"
\begin{equation}\label{norma}
\int_\mathbb T|\phi_n^*(z)|^{-2}d\theta
=2\pi~,\quad\phi_n^*(0)>0\,\,.
\end{equation}

\item[3.] $\~F\!\in\!C^\infty(\mathbb T)$, $\Re\~F>0$ on $\mathbb T$, and
\begin{equation}\label{norka}
\frac1{2\pi}\int_\mathbb T\Re\~F(e^{i\theta})d\theta=1\,\,.
\end{equation}

\end{itemize}
Denote the Schur parameters given by the probability measures
$\mu_n$ and $\~\sigma$
\[
d\mu_n=\frac{d\theta}{2\pi|\phi_n^*(e^{i\theta})|^2}, \quad
d\~\sigma=\~\sigma'd\theta=\frac{\Re \~F(e^{i\theta})}{2\pi}d\theta,
\]
as $\{\gamma_j\}$ and $\{\~\gamma_j\}$, respectively. Then, the
probability measure $\sigma$, corresponding to Schur coefficients
\[
\gamma_0,\ldots, \gamma_{n-1},\~\gamma_0,\~\gamma_1,\ldots
\]
is purely absolutely continuous with the weight given by
\begin{equation}\label{mp}
\sigma'=\frac{4\~\sigma'}{|\phi_n+\phi_n^*+\~F(\phi_n^*-\phi_n)|^2}=\frac{2\Re\~F}
{\pi|\phi_n+\phi_n^*+\~F(\phi_n^*-\phi_n)|^2}\,\,.
\end{equation}
The polynomial $\phi_n$ is the orthonormal polynomial for $\sigma$.
\end{lemma}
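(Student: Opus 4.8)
The plan is to identify the Caratheodory function $F_\sigma$ of the measure $\sigma$ attached to the Schur sequence $\gamma_0,\dots,\gamma_{n-1},\~\gamma_0,\~\gamma_1,\dots$ as an explicit M\"obius transform of $\~F$ (here $\~F$ is taken with $\~F(0)=1$, i.e.\ $\~F(z)=\int_{\mathbb T}C(z,\xi)\,d\~\sigma(\xi)$ is the Caratheodory function of the probability measure $\~\sigma$), and then to read off both assertions of the Lemma. First, hypotheses (1) and (2) together with Lemma~\ref{vspomag} show that $\phi_n$ is exactly the $n$-th orthonormal polynomial of the Bernstein-Szeg\H{o} measure $\mu_n$: ``$\phi_n^*$ has no root in $\overline{\mathbb D}$'' is the same as ``$\phi_n$ has all $n$ zeros in $\mathbb D$'', and \eqref{norma} is, using $|\phi_n|=|\phi_n^*|$ on $\mathbb T$, precisely the normalization in that lemma. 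Let $\psi_n$ be the second-kind polynomial for $\mu_n$ and $\gamma_0,\dots,\gamma_{n-1}$ its Schur parameters (the later ones vanish). Since the degree-$n$ orthonormal polynomial of a probability measure with infinitely many growth points depends only on its first $n$ Schur parameters, and $\sigma$ — which has infinitely many growth points, all of its Schur parameters lying in $\mathbb D$ — shares these with $\mu_n$, the polynomial $\phi_n$ is the $n$-th orthonormal polynomial for $\sigma$ too; that is the last claim of the Lemma.

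For $F_\sigma$ I would use transfer matrices. In $2\times2$ form the Szeg\H{o} recursions \eqref{srecurs}, \eqref{secon} read $\binom{\phi_{j+1}}{\phi_{j+1}^*}=\rho_j^{-1}\left(\begin{smallmatrix}z&-\overline\gamma_j\\-\gamma_j z&1\end{smallmatrix}\right)\binom{\phi_j}{\phi_j^*}$, and the \emph{same} matrix sends $\binom{\psi_j}{-\psi_j^*}\mapsto\binom{\psi_{j+1}}{-\psi_{j+1}^*}$. Hence, writing $M_n(z)$ for the product of the first $n$ such matrices, $M_n\binom{1}{1}=\binom{\phi_n}{\phi_n^*}$ and $M_n\binom{1}{-1}=\binom{\psi_n}{-\psi_n^*}$. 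Because the Schur sequence of $\sigma$ is $\gamma_0,\dots,\gamma_{n-1}$ followed by that of $\~\sigma$, the level-$(n+m)$ transfer matrix of $\sigma$ is $\~M_m M_n$, with $\~M_m$ the analogous product for $\~\sigma$, which acts through $\~\phi_m,\~\phi_m^*,\~\psi_m,\~\psi_m^*$ of $\~\sigma$. Reading off second components and dividing, the level-$(n+m)$ Bernstein-Szeg\H{o} approximant of $\sigma$ equals
\[
\frac{(\psi_n^*-\psi_n)+(\~\psi_m^*/\~\phi_m^*)(\psi_n+\psi_n^*)}{(\phi_n+\phi_n^*)+(\~\psi_m^*/\~\phi_m^*)(\phi_n^*-\phi_n)}\,.
\]
Letting $m\to\infty$, with $\~\psi_m^*/\~\phi_m^*\to\~F$ and the approximant $\to F_\sigma$ uniformly on compact subsets of $\mathbb D$ (convergence of Bernstein-Szeg\H{o} approximants), we get
\[
F_\sigma=\frac{(1+\~F)\psi_n^*-(1-\~F)\psi_n}{\phi_n+\phi_n^*+\~F(\phi_n^*-\phi_n)}=:\frac{N}{D}\,.
\]
(Checks: $n=0$ gives $F_\sigma=\~F$; $\~F\equiv1$ gives $F_\sigma=\psi_n^*/\phi_n^*$, the Bernstein-Szeg\H{o} formula for $\mu_n$.)

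Next I would show $D$ has no zero on $\overline{\mathbb D}$, by factoring $D=(1+\~F)\phi_n^*\bigl[1+\tfrac{1-\~F}{1+\~F}\cdot\tfrac{\phi_n}{\phi_n^*}\bigr]$: the prefactor is zero-free on $\overline{\mathbb D}$ by hypothesis (1) and $\Re(1+\~F)>0$, while in the bracket $|\phi_n/\phi_n^*|\le1$ on $\overline{\mathbb D}$ and $|(1-\~F)/(1+\~F)|<1$ on $\overline{\mathbb D}$ (in $\mathbb D$ as $\~F$ is Caratheodory, on $\mathbb T$ as $\Re\~F>0$ there), so the bracket stays away from $0$. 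Thus $F_\sigma=N/D$ extends to a $C^\infty$ function on $\overline{\mathbb D}$; a Caratheodory function continuous on $\overline{\mathbb D}$ is the Poisson integral of its boundary real part, so by uniqueness of the Herglotz representation $\sigma$ is purely absolutely continuous with $\sigma'=(2\pi)^{-1}\Re F_\sigma$ on $\mathbb T$. Finally I would compute $\Re F_\sigma$: on $\mathbb T$ one has $\overline{\phi_n}=z^{-n}\phi_n^*$, $\overline{\psi_n}=z^{-n}\psi_n^*$ and the conjugate relations, and expanding $N\overline D+\overline N D$ the cross terms (those in $\psi_n^*\phi_n^*$ and in $\psi_n\phi_n$) cancel, leaving
\[
N\overline D+\overline N D=z^{-n}\bigl(|1+\~F|^2-|1-\~F|^2\bigr)\bigl(\phi_n\psi_n^*+\phi_n^*\psi_n\bigr)=8\,\Re\~F\,,
\]
by $|1+\~F|^2-|1-\~F|^2=4\Re\~F$ and the classical identity $\phi_n\psi_n^*+\phi_n^*\psi_n=2z^n$. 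Hence $\Re F_\sigma=4\Re\~F/|D|^2$ and $\sigma'=(2\pi)^{-1}\Re F_\sigma=2\Re\~F/(\pi|D|^2)=4\~\sigma'/|D|^2$, which is \eqref{mp}.

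I expect the main obstacle to be the bookkeeping behind the formula for $F_\sigma$ — fixing the order of the transfer matrices, using that one matrix propagates both kinds of polynomials, and pinning down the exact M\"obius combination (confirmed by the two checks). After $F_\sigma=N/D$ the rest is short, precisely because the identity $\phi_n\psi_n^*+\phi_n^*\psi_n=2z^n$ makes the boundary computation collapse.
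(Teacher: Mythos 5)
Your proof is correct, and it takes a genuinely different route from the paper's after the common opening. Both arguments start the same way: identify $\phi_n$ as the $n$-th orthonormal polynomial for $\mu_n$ (and hence for $\sigma$, since they share the first $n$ Schur parameters) and then factor the level-$(n+m)$ transfer matrix for $\sigma$ as $\widetilde M_m M_n$. The paper, however, tracks only $\phi_{n+m}^*$, rewrites it as
$2\phi_{n+m}^*=\widetilde\phi_m^*\bigl(\phi_n+\phi_n^*+\widetilde F_m(\phi_n^*-\phi_n)\bigr)$,
invokes Baxter's theorem twice (once to get $\{\widetilde\gamma_j\}\in\ell^1$, once to conclude $\sigma$ is purely a.c.), uses the $\ell^1$ convergence $\phi_j^*\to\Pi$, $\widetilde\phi_j^*\to\widetilde\Pi$ to Szeg\H{o} functions, and finally squares absolute values of $2\Pi=\widetilde\Pi\bigl(\phi_n+\phi_n^*+\widetilde F(\phi_n^*-\phi_n)\bigr)$. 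You instead track both first- and second-kind polynomials, obtain the Bernstein--Szeg\H{o} approximant of $\sigma$ as an explicit M\"obius transform of $\widetilde F_m$, and pass to the limit to get the Caratheodory function
$F_\sigma=\bigl((1+\widetilde F)\psi_n^*-(1-\widetilde F)\psi_n\bigr)/D$ with $D=\phi_n+\phi_n^*+\widetilde F(\phi_n^*-\phi_n)$.
Absolute continuity then follows directly from the fact that $D$ is zero-free on $\overline{\mathbb D}$, so $F_\sigma$ extends smoothly to the boundary — this bypasses Baxter entirely. The formula for $\sigma'$ drops out of the Wronskian identity $\phi_n\psi_n^*+\phi_n^*\psi_n=2z^n$ together with $|1+\widetilde F|^2-|1-\widetilde F|^2=4\Re\widetilde F$, after the cross terms in $N\overline D+\overline N D$ cancel. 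Your route is more self-contained and gives the full boundary Caratheodory function, not just $|\Pi|^2$; the paper's route is shorter given the quoted machinery. One minor point you should spell out: to conclude $|(1-\widetilde F)/(1+\widetilde F)|<1$ on all of $\overline{\mathbb D}$ you need $\Re\widetilde F>0$ strictly in $\mathbb D$, which follows from $\Re\widetilde F\ge 0$ and $\widetilde F(0)=1>0$ (Harnack / minimum principle), not from the Caratheodory property alone.
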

The proof of this Lemma is contained in \cite{adt}. We, however,
prefer to give its sketch  here.

\begin{proof}
First, notice that $\{\~\gamma_j\}\in \ell^1$ by Baxter's Theorem
(see, e.g., \cite{sim1}, Vol.1, Chapter 5). Therefore, $\sigma$ is
purely absolutely continuous by the same Baxter's criterion. Define
the orthonormal polynomials of the first/second kind corresponding
to measure $\~\sigma$ by $\{\~\phi_j\}, \{\~\psi_j\}$. Similarly,
let $\{\phi_j\}, \{\psi_j\}$ be orthonormal polynomials for
$\sigma$. Since, by construction, $\mu_n$ and $\sigma$ have
identical first $n$ Schur parameters, $\phi_n$ is $n$-th orthonormal
polynomial for~$\sigma$.

 Let
us compute the polynomials $\phi_j$ and $\psi_j$, orthonormal with
respect to $\sigma$, for the indexes $j>n$. By \eqref{secon}, the
recursion can be rewritten in the following matrix form
\begin{equation}\label{m-ca}
\left(\begin{array}{cc}
\phi_{n+m} & \psi_{n+m}\\
\phi_{n+m}^* & -\psi_{n+m}^*
\end{array}\right)=\left(\begin{array}{cc}
{\cal A}_m & {\cal B}_m\\
{\cal C}_m & {\cal D}_m
\end{array}\right)\left(\begin{array}{cc}
\phi_{n} & \psi_{n}\\
\phi_{n}^* & -\psi_{n}^*
\end{array}\right)\end{equation}
where ${\cal A}_m, {\cal B}_m, {\cal C}_m, {\cal D}_m$ satisfy
\begin{eqnarray*}\left(\begin{array}{cc}
{\cal A}_0 & {\cal B}_0\\
{\cal C}_0 & {\cal D}_0
\end{array}\right)=\left(\begin{array}{cc}
1 & 0\\
0 & 1
\end{array}\right),\hspace{6cm}\\
\left(\begin{array}{cc}
{\cal A}_m & {\cal B}_m\\
{\cal C}_m & {\cal D}_m
\end{array}\right)=\frac1{\~\rho_0\cdot\ldots\cdot\~\rho_{m-1}}\left(\begin{array}{cc}
z & -\~\gamma_{m-1}\\
-z\~\gamma_{m-1} & 1
\end{array}\right)\cdot\ldots\cdot\left(\begin{array}{cc}
z & -\~\gamma_0\\
-z\~\gamma_0 & 1
\end{array}\right)\end{eqnarray*}
and thus depend only on $\~\gamma_0,\ldots,\~\gamma_{m-1}$.
Moreover, we have
\[\left(\begin{array}{cc}
\~\phi_m & \~\psi_m\\
\~\phi_m^* & -\~\psi^*_m
\end{array}\right)=\left(\begin{array}{cc}
{\cal A}_m & {\cal B}_m\\
{\cal C}_m & {\cal D}_m
\end{array}\right)\left(\begin{array}{cc}
1 & 1\\
1 & -1
\end{array}\right)
\,\,.
\]
Thus, ${\cal A}_m\!=\!(\~\phi_m\,{+\,\~\psi_m)/2,~{\cal
B}_m\!=\!(\~\phi_m\,-}\,\~\psi_m)/2,~ {\cal
C}_m\!=\!(\~\phi^*_m\,{-\,\~\psi^*_m)/2,~{\cal
D}_m\!=\!(\~\phi^*_m\,+}\,\~\psi^*_m)/2$ and their substitution into
\eqref{m-ca} yields
\begin{equation}\label{intert}
2\phi_{n+m}^*=\phi_n(\~\phi_m^*-\~\psi^*_m)+\phi_n^*(\~\phi_m^*+\~\psi^*_m)=
\~\phi_m^*\left(\phi_n+\phi_n^*+\~
F_m(\phi_n^*-\phi_n)\right)\end{equation} where
\[
\~F_m(z)=\frac{\~\psi^*_m(z)}{\~\phi^*_m(z)}\,\,.
\]
Since $\{\~\gamma_n\}\!\in\!\ell^1$ and $\{\gamma_n\}\!\in\!\ell^1$,
we have (\cite{sim1}, p.~225)
\[
\~F_m\to\~F~{\rm as~}m\to\infty~{\rm and~}
\phi_j^*\to\Pi,~\~\phi_j^*\to\~\Pi~{\rm as~}j\to\infty\,\,.
\]
uniformly on $\overline{\mathbb D}$. The functions $\Pi$ and $\~\Pi$
are the Szeg\H{o} functions of $\sigma$ and $\~\sigma$,
respectively, i.e., they are the outer functions in $\mathbb D$ that
satisfy
\begin{equation}\label{facti}
|\Pi|^{-2}=2\pi\sigma',\quad |\~\Pi|^{-2}=2\pi\~\sigma'\,\,
\end{equation}
on $\mathbb{T}$. In \eqref{intert}, send $m\to\infty$ to get
\begin{equation}\label{facti1}
2\Pi=\~\Pi\left(\phi_n+\phi_n^*+\~F(\phi_n^*-\phi_n)\right)
\end{equation}
and we have \eqref{mp} after taking the square of absolute values
and using \eqref{facti}.
\end{proof}
In \cite{adt}, to prove \eqref{osnova-i} with small $\delta$, the
polynomial $\phi_n$ and $\~F$ were chosen to satisfy  extra
conditions (see Decoupling Lemma in \cite{adt}):
\begin{equation}\label{od1}
|\phi_n(1)|>C\sqrt n
\end{equation}
and
\begin{equation}\label{sec1}
|\phi_n^*(z)|+|\~ F(z)(\phi_n^*(z)-\phi_n(z))|\le C\sqrt{\Re
\~F(z)},\quad z\in \mathbb{T}
\end{equation}
\eqref{od1} yields the
 $\sqrt n$--growth claimed in Theorem \ref{T3-i}.
 The last inequality guarantees that $\sigma$ belongs to Steklov
class due to \eqref{mp} and \eqref{facti}. However, as will be made
clear in the next section, \eqref{sec1} is not necessary for
polynomials to have large uniform norm.
\medskip

\section{Rakhmanov's construction via new approach}

Our goal in this section is twofold. Firstly, we use the method
explained in section 2 to reproduce Rakhmanov's polynomial and
polynomials with the similar structure that have large uniform norm
and which are orthogonal with respect to a measure in Steklov class.
Secondly, we show that the last condition in the Decoupling Lemma
(\cite{adt}, formula $(3.6)$, or, what is the same, the bound
\eqref{sec1} above) is not really necessary for the orthogonal
polynomial to have large uniform norm. Instead, that can be achieved
by a different sort of cancelation which might be of its own
interest.\smallskip

We start with recalling the construction by Rakhmanov \cite{rakh1}.
In Lemma \ref{geron}, take the Lebesgue measure $\mu:
d\mu=d\theta/(2\pi)$. We have the following expression for the
kernel
\[
K_{n-1}(\xi,z,\mu)=\sum_{j=0}^{n-1}\bar\xi^jz^j=\frac{(z\bar\xi)^n-1}{z\bar\xi-1}
\]
Given two parameters $\epsilon, (0<\epsilon<1)$ and $m, (m<n-1)$, we
add the mass $m_k=\epsilon m^{-1}$ to each of the points
$\xi_k=e^{i2\pi k/n},\, k=0,\ldots, m-1$. Then Lemma \ref{geron}
gives

\[
\Phi_n(z,\mu)=z^n-\frac{\epsilon m^{-1}}{1+\epsilon n m^{-1}}
\sum_{j=0}^{m-1} \Bigl((\overline{\xi}_{j})^{\,n-1}z^{n-1}+
\ldots+\overline{\xi}_jz+1 \Bigr)
\]
and therefore
\begin{equation}\label{leseq}
\Phi_n^*(z,\mu)=1-\frac{\epsilon m^{-1}}{1+\epsilon n m^{-1}}(d_1
z+d_2z^2+\ldots+d_nz^n)
\end{equation}
\[
d_l=\sum_{j=0}^{m-1}\xi_j^{n-l}=\sum_{j=0}^{m-1}\xi_j^{-l}, \quad
l=1,\ldots, n
\]
Thus, if $n$ is even and $m=n/2$, we have
\begin{equation}\label{choisi-7}
d_n=m, \quad d_l=\frac{(-1)^l-1}{e^{-i2\pi l/n}-1},\quad l=1,\ldots,
n-1
\end{equation}
and $ \overline{d}_{n-l}=d_l, \,l=1,\ldots, n-1$. Then,
\begin{equation}\label{curi-curi}
 \Phi_n^*-\Phi_n=\left(\frac{1+3\epsilon}{1+2\epsilon}\right)(1-z^n), \quad \|\Phi_n^*-\Phi_n\|_{L^\infty(\mathbb{T})}<C
\end{equation}
Since
\begin{equation}\label{taytay}
e^{-i2\pi l/n}-1=-i\frac{2\pi l}{n}+O\left(\frac{l^2}{n^2}\right),
\quad l<0.01 n,
\end{equation}
it is clear that $\|\Phi_n\|_{L^\infty(\mathbb{T})}\sim 1+\epsilon
\log n$ and this growth occurs around the points $z=1$ and $z=-1$.
The choice of $\{m_j\}$ can be rather arbitrary and does not have to
be given by equal mass distribution to provide the logarithmic
growth. Since $\|\eta\|=1+\epsilon$ and $\eta'=(2\pi)^{-1}$, the
normalized measure $\eta/\|\eta\|\in S_\delta,
\delta=(1+\epsilon)^{-1}$
\[
\|\phi_n(z,\eta/\|\eta\|)\|_{L^\infty(\mathbb{T})}\sim 1+\epsilon\ln
n
\]
This argument proves \eqref{loga}.

The next theorem is the main result of the paper. It explains how
the polynomial of the structure similar to \eqref{leseq} can be
obtained by the method described in the previous section.

\begin{theorem}\label{lbl}
For every $\epsilon\in (0,1)$, there is $\sigma=\sigma'd\theta$:
\[
\int_{0}^{2\pi} d\sigma=1, \quad
|\sigma'(\theta)-(2\pi)^{-1}|\lesssim  \epsilon
\]
and
\[
\|\phi_n(z,\sigma)\|_{L^\infty(\mathbb{T})}\sim \epsilon \log n
\]
\end{theorem}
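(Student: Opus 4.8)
The plan is to apply Lemma~\ref{decop} with a carefully chosen pair $(\phi_n,\~F)$ that mimics the structure \eqref{leseq} of Rakhmanov's polynomial, but \emph{without} imposing the strong pointwise bound \eqref{sec1}. Concretely, I would take $\~F\equiv 1$ (so $\~\sigma$ is normalized Lebesgue measure, $\~\gamma_j=0$, $\~\Pi\equiv 1$), which collapses \eqref{mp} to $\sigma'=\dfrac{1}{\pi|\phi_n^*|^2}$ and \eqref{facti1} to $2\Pi=\phi_n+\phi_n^*+(\phi_n^*-\phi_n)=2\phi_n^*$, i.e.\ $\phi_n^*=\Pi$ is the Szeg\H{o} function of $\sigma$. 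So everything reduces to choosing a single polynomial $\psi_n(z):=\phi_n^*(z)$ with: (i) no zeros in $\overline{\mathbb D}$; (ii) the normalization $\int_{\mathbb T}|\psi_n|^{-2}\,d\theta=2\pi$ and $\psi_n(0)>0$; (iii) $\big|\,|\psi_n(e^{i\theta})|^2-\tfrac1{2}\big|\lesssim \epsilon$ so that $\sigma'=\tfrac1{\pi|\psi_n|^2}$ satisfies $|\sigma'-(2\pi)^{-1}|\lesssim\epsilon$; and (iv) the reversed polynomial $\phi_n=\psi_n^*$ (the $(\ast)$ of $\psi_n$) has $\|\phi_n\|_{L^\infty(\mathbb T)}\sim\epsilon\log n$. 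Note that since $|\psi_n|\sim 1$ on $\mathbb T$, conditions (i)--(ii) are essentially free after a harmless rescaling, and the content is entirely in reconciling (iii) with (iv).

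For the explicit choice I would take $\psi_n$ to be (a suitably normalized version of) the right-hand side of \eqref{leseq} itself, i.e.
\begin{equation}\label{pp:choice}
\psi_n(z)=1-\frac{\epsilon}{1+\epsilon}\,\big(d_1 z+d_2z^2+\dots+d_nz^n\big),\qquad d_l=\sum_{j=0}^{m-1}\xi_j^{-l},\ \ m=n/2,
\end{equation}
with $\xi_j=e^{i2\pi j/n}$, matching \eqref{choisi-7}. Then $\phi_n=\psi_n^*$ has the form $\phi_n(z)=z^n-\tfrac{\epsilon}{1+\epsilon}\sum_{l=1}^n \overline{d_{n-l}}\,z^{n-l}$, and using $\overline{d_{n-l}}=d_l$ together with the expansion \eqref{taytay} of $e^{-i2\pi l/n}-1$, the partial sums $\sum_l d_l z^{n-l}$ behave like a truncated conjugate-function / logarithmic kernel and produce the $\sim\epsilon\log n$ spike near $z=\pm1$, exactly as in the displayed Rakhmanov argument after \eqref{taytay}. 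That takes care of (iv). For (i), a Rouché-type estimate works: on $\overline{\mathbb D}$ the perturbation $\tfrac{\epsilon}{1+\epsilon}\sum d_l z^l$ is dominated — after a further $O(\epsilon\log n)$ renormalization if needed, or by working on $\{|z|\le 1-c/n\}$ and handling the boundary layer separately — so $\psi_n$ is zero-free there; alternatively one invokes Lemma~\ref{vspomag} in reverse, since $|\psi_n|^2$ is a strictly positive trigonometric polynomial and hence $\psi_n$ is, up to unimodular constant, the Szeg\H{o} factor of $\tfrac1{2\pi}|\psi_n|^{-2}\,d\theta$. For (iii), I compute $|\psi_n(e^{i\theta})|^2 = 1 - 2\Re\!\big(\tfrac{\epsilon}{1+\epsilon}\sum_l d_l e^{il\theta}\big) + \big|\tfrac{\epsilon}{1+\epsilon}\sum_l d_l e^{il\theta}\big|^2$; the point is that $\sum_l d_l e^{il\theta}$, while \emph{not} uniformly small (its sup norm is $\sim\log n$), is small in a way that survives the averaging structure of $\sigma'=1/(\pi|\psi_n|^2)$ only through an $L^\infty$ bound — so I must instead choose the mass distribution $\{m_j\}$ (equivalently, the coefficients $d_l$) more cleverly than the equal-mass choice so that $|\psi_n|^2$ stays within $\epsilon$ of a constant pointwise. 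This is where the "different sort of cancelation" advertised after Theorem~\ref{rrra-i} enters.

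The main obstacle is precisely this tension in the last two paragraphs: the equal-mass Rakhmanov kernel gives $\|\phi_n\|_\infty\sim\epsilon\log n$ but its $(\ast)$-partner has $|\psi_n|^2$ deviating from a constant by $\sim\epsilon\log n$ near $z=\pm1$, which is far worse than the required $O(\epsilon)$ — so the naive choice does \emph{not} land $\sigma$ in the stated Steklov-type class. The resolution I would pursue is to split $\psi_n=\psi_n^{(1)}\psi_n^{(2)}$ (or additively) so that the large $L^\infty$ contribution of $\phi_n$ comes from a \emph{unimodular-on-}$\mathbb T$ factor (a Blaschke-type or $z^n$-type rotation carrying the degree) while the factor controlling $|\psi_n|$ is a genuinely small perturbation of $1$; concretely, one wants $|\phi_n^*(e^{i\theta})|$ bounded above and below by constants near $1$ even though $\phi_n(e^{i\theta})$ itself is large, which is possible because $|\phi_n|$ and $|\phi_n^*|$ coincide on $\mathbb T$ only up to the modulus of the \emph{whole} polynomial — wait, they are equal on $\mathbb T$, so the spike must instead be arranged to occur in a thin arc of measure $o(1)$ on which $|\psi_n|$ is still $\sim 1$ but the argument-of-$\phi_n$ cancellation responsible for the partial-sum estimate \eqref{taytay} is not visible to $|\psi_n|$. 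Making this dichotomy precise — choosing non-equal masses $m_j$, or equivalently tuned coefficients $d_l$, so that $\big|\,|\psi_n|^2 - \text{const}\,\big|\lesssim\epsilon$ uniformly while the reversed partial sums still sum to $\sim\epsilon\log n$ at $z=\pm1$ — is the crux, and I expect it to occupy the bulk of the proof; once it is done, conditions (i)--(iv) are verified as above and Lemma~\ref{decop} delivers $\sigma$, with $\phi_n$ its $n$-th orthonormal polynomial, completing the argument.
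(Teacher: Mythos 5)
There is a genuine gap, and you in fact run into it yourself mid-argument but then push past it with a fix that cannot work. You take $\widetilde F\equiv 1$, which collapses \eqref{mp} to $\sigma'=\bigl(2\pi|\phi_n^*|^2\bigr)^{-1}$; to land in the Steklov class you then need $|\phi_n^*(e^{i\theta})|$ bounded above and below by constants close to $1$, while simultaneously requiring $\|\phi_n\|_{L^\infty(\mathbb T)}\sim\epsilon\log n$. But $|\phi_n(e^{i\theta})|=|\phi_n^*(e^{i\theta})|$ for every $\theta$ (on $\mathbb T$ one has $\phi_n^*(z)=z^n\overline{\phi_n(z)}$), so the conditions you label (iii) and (iv) are flatly incompatible whenever $\widetilde F$ is constant: no choice of masses $m_j$ or coefficients $d_l$ --- equal or ``tuned'' --- can separate them. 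The suggestion that the spike could be confined to a thin arc on which ``$|\psi_n|$ is still $\sim 1$ but the argument cancellation is not visible to $|\psi_n|$'' is exactly the confusion: the moduli coincide pointwise, so there is no arc where one is large and the other is not.

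The paper's proof avoids this obstruction precisely by taking $\widetilde F$ \emph{non-constant}. It builds an analytic polynomial $M_n$ of degree $n-1$ (Fej\'er-smoothed Cauchy transform of a step function) with $\|\Re M_n\|_\infty\lesssim 1$, $\|\Im M_n\|_\infty\sim\log n$, and $\int\Re M_n\,d\theta=0$, then sets $\widetilde F=1-2\epsilon M_n$, $D_n=M_n+b$, $\phi_n^*=a\bigl(1+\epsilon(D_n+D_n^*)\bigr)$. With these choices $\phi_n^*-\phi_n=a(1-z^n)$ and
\[
\phi_n+\phi_n^*+\widetilde F(\phi_n^*-\phi_n)=2\phi_n^*-2a\epsilon M_n(1-z^n)
=2a\bigl(1+\epsilon b(1+z^n)+2\epsilon z^n\Re M_n\bigr),
\]
so the unbounded $\Im M_n$ piece sitting inside $\phi_n^*$ is cancelled against $\widetilde F(\phi_n^*-\phi_n)$, and the \emph{denominator} of \eqref{mp} --- not $|\phi_n^*|^2$ itself --- stays within $O(\epsilon)$ of a constant. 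That denominator-level cancellation is the ``different sort of cancellation'' alluded to at the end of Section~1, and it is exactly what your $\widetilde F\equiv 1$ route forfeits. With $\widetilde F\equiv 1$ you are back to the original Decoupling-Lemma regime of \cite{adt}, where one needs the strong pointwise bound \eqref{sec1} on $\phi_n^*$; the entire point of this theorem is to exhibit a construction where \eqref{sec1} fails but Steklov-ness still holds, and that requires a non-trivial $\widetilde F$.
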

\begin{proof}
We will consider an analytic polynomial $M_{n}$ of degree $n-1$
satisfying
 two conditions
\begin{equation}\label{condicion}
\int_{0}^{2\pi} \Re M_n(e^{i\theta})d\theta=0, \quad\|\Re
M_n(e^{i\theta})\|_{L^\infty(\mathbb{T})}<C, \quad \|\Im
M_n(e^{i\theta})\|_{L^\infty(\mathbb{T})}\sim \log n
\end{equation}
This $M_n$ is easy to find. Consider $
l(\theta)=\chi_{0<\theta<\pi}-\chi_{\pi<\theta<2\pi} $ and take
\[
L(z)=\cal{C}(l)=\frac{1}{2\pi}\int_{0}^{2\pi}
C(z,e^{i\theta})l(e^{i\theta})d\theta=\frac{1}{2\pi}\int_{0}^{2\pi}
\frac{e^{i\theta}+z}{e^{i\theta}-z}l(e^{i\theta})d\theta
\]
$\Re {C}$ is the Poisson kernel so $\Re L(e^{i\theta})=l(\theta),\,
\theta\neq 0,\pi$. Then, we take $M_n=\cal{F}_{n}\ast L$, where
$\cal{F}_n$ is the Fejer kernel. Since $\cal{F}_n$ is real,
nonnegative trigonometric polynomial of degree $n-1$ and
$\|\cal{F}_n\|_{L^1[0,2\pi]}=1$, we have
\[
|\Re M_n|=|\cal{F}_n\ast l|\le 1,\,z\in \mathbb{T}; \quad
\int_{0}^{2\pi} \Re M_n(e^{i\theta})d\theta =0, \quad M_n(0)=0
\]
The logarithmic growth of $M_n$ around the points $\theta=0$ and
$\theta=\pi$ is a standard exercise, e.g.,
\begin{equation}\label{sniii}
|\Im M_n(e^{i\theta})|\sim \log n , \quad |\theta|<C n^{-1}
\end{equation}
with arbitrary large fixed $C$. Now, take a small positive $
\epsilon$ and define
\begin{equation}\label{new1-fr}
\widetilde F=1-2\epsilon M_n, \quad D_n=M_n+b,\quad
\phi_n^*=a(1+\epsilon(D_n+D_n^*))
\end{equation}
where $a$ and $b$ are positive parameters to be chosen later so that
all conditions of the Lemma \ref{decop} are satisfied. We have
\[
\phi_n=a(z^n+\epsilon(D_n+D_n^*))
\]
$\Bigl( \Bigr.$Notice that $ \phi_n^*-\phi_n=a(1-z^n) $ and compare
it with \eqref{curi-curi}.$\Bigl. \Bigr)$ Since $D_n(0)=b$ and $\deg
D_n=n-1$, we have $D_n^*(0)=0$ and then $\phi_n^*(0)=a(1+\epsilon
b)>0$. Let us check other normalization conditions for these
functions.
\begin{equation}\label{deviatsia}
\Re \widetilde F(e^{i\theta})=1+O(\epsilon)>0,\quad \int_{0}^{2\pi}
\Re \widetilde F(e^{i\theta})d\theta=2\pi
\end{equation}
Choose $b$  such that $\Re D_n\in [C_1,C_2]$ with $C_1>0$. For
example, if $b=2$, then $\Re D_n\in [1,3]$. We can write
\[
1+\epsilon(D_n+D_n^*)=D_n\Bigl(\epsilon(1+e^{i(n\theta-2\Theta_n)})+D_n^{-1}\Bigr),
\quad z=e^{i\theta}\in \mathbb{T}
\]
where $\Theta_n=\arg D_n$. Notice that $D_n$ is zero free in
$\overline{\mathbb{D}}$ since it has positive real part on
$\mathbb{T}$. Since
\[
\Re \Bigl(1+e^{i(n\theta-2\Theta_n)}\Bigr)\ge 0, \quad \Re
D_n^{-1}=\frac{\Re D_n}{|D_n|^2}>0
\]
we have that $\phi^*_n$ is zero free in $\overline{\mathbb{D}}$.
Then, for $z\in \mathbb{T}$,
\[
|1+\epsilon(D_n+D_n^*)|\ge \frac{|\Re D_n|}{|D_n|}\sim |D_n|^{-1}
\]
and
\begin{equation}\label{ra-no}
\int_{0}^{2\pi} |1+\epsilon(D_n+D_n^*)|^{-2} d\theta\lesssim
\int_{0}^{2\pi} |D_n|^2d\theta\lesssim 1
\end{equation}
since $\|D_n\|_{L^2(\mathbb{T})}\lesssim
b+\|L\|_{L^2(\mathbb{T})}\lesssim 1$. On the other hand,
\[
\|\Bigl(1+\epsilon(D_n+D_n^*)\Bigr)-1\|_{L^2(\mathbb{T})}\leq
2\epsilon\|D_n\|_{L^2(\mathbb{T})}\lesssim \epsilon
\]
and so
$\|1+\epsilon(D_n+D_n^*)\|_{L^2(\mathbb{T})}=\sqrt{2\pi}+O(\epsilon)$.
From Cauchy-Schwarz, we get
\[
2\pi\leq
\|1+\epsilon(D_n+D_n^*)\|_{L^2(\mathbb{T})}\|\bigl(1+\epsilon(D_n+D_n^*)\bigr)^{-1}\|_{L^2(\mathbb{T})}
\]
and so
\[
\frac{2\pi}{\sqrt{2\pi}+O(\epsilon)}\leq
\|\bigl(1+\epsilon(D_n+D_n^*)\bigr)^{-1}\|_{L^2(\mathbb{T})}\lesssim
1
\]
Let us choose $a$ so that
\[
\int_{0}^{2\pi} |\phi_n^*|^{-2}d\theta=2\pi
\]
which implies $a\sim 1$. We satisfied all  conditions of the Lemma
\ref{decop}. Consider the formula  \eqref{mp}. We can write
\begin{equation}\label{ushi}
\phi_n+\phi_n^*+\widetilde F (\phi_n^*-\phi_n)=2\phi_n^*-2\epsilon
M_n(\phi_n^*-\phi_n)=
\end{equation}
\[
2\phi_n^*-2a\epsilon (M_n-M_nz^n)=2a\Bigl((1+\epsilon
(D_n+D_n^*))-\epsilon (M_n-M_nz^n)\Bigr)=
\]
\[
2a\Bigl((1+\epsilon (D_n+D_n^*))-\epsilon (M_n-(M_n+\overline
M_n-\overline M_n)z^n)\Bigr) =2a\Bigl(1+\epsilon b(1+z^n)+2\epsilon
z^n \Re M_n \Bigr)
\]
Let us control the deviation  of  $\sigma'$ from the constant. We
get
\[
2\pi\sigma'=4(2a)^{-2}\cdot \Re \widetilde F \cdot
|1+O(\epsilon)|^{-2}=a^{-2}(1+O(\epsilon))\cdot |1+O(\epsilon)|^{-2}
\]
where we used $|\Re M_n|\le 1$ and \eqref{deviatsia}. Since $a\sim
1$, we have that the deviation of $2\pi\sigma'$ from  $a^{-2}$ is at
most $C\epsilon$. Since $\sigma$ is a probability measure, this
implies $a=1+O(\epsilon)$. We are left to show that
$\|\phi_n\|_{L^\infty(\mathbb{T})}\sim \log n $. By construction, it
is sufficient to prove
\[
\|M_n+z^n\overline M_n\|_{L^\infty(\mathbb{T})}\sim \log n
\]
Indeed,
\[
|M_n(\widetilde z_n)+\widetilde z_n^{\,n}\overline M_n(\widetilde
z_n)|\sim \log n, \quad \widetilde z_n=e^{i\pi/n}
\]
as follows from \eqref{sniii}.
\end{proof}

{\bf Remark.} Our analysis covers the polynomial \eqref{leseq}
constructed by Rakhmanov too. If $d_0=m$, we can rewrite
\eqref{leseq} as
\begin{equation}\label{new-fr}
\Phi_n^*=1+\frac{\epsilon}{1+\epsilon n m^{-1}}-\frac{\epsilon
m^{-1}}{1+\epsilon n m^{-1}}(d_0+d_1 z+d_2z^2+\ldots+d_nz^n)
\end{equation}
\[
=1+\frac{\epsilon}{1+2\epsilon }-\epsilon(b+bz^n+M_n+M^*_n)
\]
with
\[
b=\frac{1}{1+2\epsilon},\, M_n=\frac{ m^{-1}}{2(1+2\epsilon)}(d_1
z+d_2z^2+\ldots+d_{n-1}z^{n-1})
\]
The straightforward analysis shows that \eqref{choisi-7} implies
\eqref{condicion}. The formula \eqref{new-fr} differs from
\eqref{new1-fr}, in essence, only by the negative sign and the
normalization factor. Different sign makes checking conditions (1)
and (2) in the Lemma \ref{decop} harder when compared to the
argument in the proof of Theorem \ref{lbl}. However, in this
particular case, this can be done directly by analyzing the
polynomial $ d_0+d_1z+\ldots+d_nz^n $ around points $z=1$ and
$z=-1$. Indeed, we have
\[
\left|\sum_{j=1}^N\frac{\sin (j \theta )}{j}\right|<C
\]
uniformly over $\theta$ and $N$. Then
\eqref{taytay},\eqref{choisi-7}, and \eqref{leseq} imply  $ \Re
\Phi_n^*= 1+O(\epsilon),\, z\in \mathbb{T}$. Therefore,
\[
\int_{0}^{2\pi} |\Phi_n^*(e^{i\theta})|^{-2}d\theta<C_1
\]
and the opposite estimate
\[
\int_{0}^{2\pi} |\Phi_n^*(e^{i\theta})|^{-2}d\theta>C_2>0
\]
follows from the analysis of $\Phi_n^*$ away from $z=\pm 1$, i.e.,
on the arcs $z=e^{i\theta}, \epsilon<|\theta|<\pi-\epsilon$. Now, we
can normalize $\Phi_n^*$ and define $ \phi^*_n=a\Phi_n^* $ so that
\[
\int_0^{2\pi} |\phi_n^*(e^{i\theta})|^{-2}d\theta=2\pi
\]
For the constant $a$, we then have $a\sim 1$. Next, to check that
$\phi_n^*$ corresponds to a Steklov measure, one only needs to
modify the choice of $\widetilde F$ by changing the sign in front of
$\epsilon$:
\[
\widetilde F=1+C\epsilon M_n
\]
and repeating \eqref{ushi} with properly chosen $C$.\smallskip

 {\bf Remark.} As one can see from the proof of Theorem \ref{lbl}, the different
sort of cancelation has been used to show the Steklov condition of
the measure. In particular, the estimate \eqref{sec1} is violated as
$\phi_n^*-\phi_n=a(1-z^n)$ does not provide the strong cancelation
around $z=1$.

{\Large \part*{Acknowledgement.}} The research of S.D. was supported
by grants NSF-DMS-1464479 and  RSF-14-21-00025. The hospitality of
Keldysh Institute of Applied Mathematics in Moscow is gratefully
acknowledged. \vspace{1cm}

\end{document}